\documentclass[preprint]{elsarticle}

\usepackage{xurl} 
\usepackage{hyperref}

\usepackage{lineno,hyperref,amsmath, amssymb,pdflscape,psfrag,setspace,commath,bbm}
\usepackage{graphicx}
\usepackage{enumerate}
\usepackage{mathrsfs}
\usepackage{bm}
\usepackage{longtable}
\usepackage{tabularx}
\usepackage{float} 
\floatstyle{plaintop}%
\restylefloat{table}%
\usepackage{float} 
\usepackage{multirow} 
\usepackage{colortbl} 
\usepackage[bf]{caption}
\usepackage{multirow,lipsum,hyphenat}
\usepackage{subfig}
\usepackage{multicol}
\usepackage{makecell}
\usepackage{appendix}
\usepackage{amsmath}

\usepackage{pgfplots}
\pgfplotsset{compat=1.18}

\usepgfplotslibrary{fillbetween}
\usepackage{booktabs,tabularx,multirow}
\usepackage[margin=1in]{geometry}
\usepackage{tikz}
\usetikzlibrary{arrows.meta, positioning}
\usetikzlibrary{calc}
\usetikzlibrary{shapes.geometric}
\usepackage{xcolor}
\definecolor{blue}{rgb}{0.0, 0.0, 1.0}
\definecolor{bleudefrance}{rgb}{0.19, 0.55, 0.91}
\definecolor{deepskyblue}{rgb}{0.0, 0.75, 1.0}
\definecolor{green}{rgb}{0.0, 0.5, 0.0}
\hypersetup{colorlinks,linkcolor={blue},citecolor={green},urlcolor={black}}
\usepackage{todonotes}
\newcommand{\TG}[1]{\todo[linecolor=orange, backgroundcolor=orange!20, tickmarkheight=1ex]{TG: #1}}

\newtheorem{theorem}{Theorem}

\newproof{proof}{Proof}

\DeclareMathOperator{\re}{\mathbbm{R}}

\modulolinenumbers[5]
\begin{document}

\begin{frontmatter}
\title{A Mathematical Model of Dengue Transmission Incorporating Hospital Capacity and Threshold-Based Fogging Interventions}

\author[IND]{Dipo Aldila}
\ead{aldiladipo@sci.ui.ac.id}
\address[IND]{Department of Mathematics, Faculty of Mathematics and Natural Sciences, Universitas Indonesia, Depok 16424, Indonesia.}
\author[ESPOL,CFD]{Joseph P\'aez Ch\'avez}
\ead{jpaez@espol.edu.ec}
\address[ESPOL]{Center for Applied Dynamical Systems and Computational Methods (CADSCOM), Faculty of Natural Sciences and Mathematics, Escuela Superior Polit\'ecnica del Litoral, P.O. Box 09-01-5863, Guayaquil, Ecuador}
\address[CFD]{Center for Dynamics, Department of Mathematics, TU Dresden, D-01062 Dresden, Germany.}
\author[UOrdu]{Aytül Gökçe}
\ead{aytulgokce@odu.edu.tr}
\address[UOrdu]{Department of Mathematics, Faculty of Science and Letters, Ordu University, 52200 Ordu, Türkiye}
\author[UKO]{Thomas Götz}
\ead{goetz@uni-koblenz.de}
\address[UKO]{Mathematical Insitute, University of Koblenz, 56070 Koblenz, Germany}
\author[JGU,IQCB]{Burcu Gürbüz$^*$}
\cortext[mycorrespondingauthor]{Corresponding author}
\ead{burcu.gurbuz@uni-mainz.de}
\address[JGU]{Institute of Mathematics, Johannes--Gutenberg Universität Mainz, 55099 Mainz, Germany}
\address[IQCB]{Institute for Quantitative and Computational Biosciences (IQCB), Johannes Gutenberg University Mainz, 55128 Mainz, Germany}

\begin{abstract}
Dengue remains an annual public health problem in many tropical countries, and recurring outbreaks indicate that current intervention strategies are not yet fully effective. Fumigation (fogging) and hospitalization are two of the most widely used interventions, but existing mathematical models typically assume unlimited hospital capacity and continuously applied fogging, overlooking the practical constraints that shape real dengue control programs. In this paper, we develop a non-smooth ordinary differential equation model of dengue transmission that incorporates a finite hospital capacity and a threshold-triggered fogging policy, activated once reported infections reach a specified fraction of that capacity. The model distinguishes three operating regimes: no fogging with full hospitalization when infections are low, fogging with full hospitalization once the activation threshold is reached, and fogging with only partial hospitalization once capacity is exceeded, in which case untreated individuals remain infectious and continue to contribute to transmission. We establish the existence and local stability of the disease-free and endemic equilibria. Numerical continuation confirms these analytical results and further reveals boundary-equilibrium bifurcations precisely at the switching thresholds, together with a Hopf bifurcation once infections exceed hospital capacity, giving rise to sustained periodic outbreaks. We also identify a fold bifurcation near the epidemic threshold that produces additional unstable equilibria even when the disease would otherwise be expected to die out, indicating a more intricate equilibrium structure than typically assumed. Continuation of the periodic solution with respect to the fogging rate and activation threshold reveals locally optimal values that minimize peak infections and the fraction of asymptomatic cases, while overly early or intense fogging increases cost without proportional epidemiological benefit. These results show that hospital capacity, fogging intensity, and the activation threshold jointly determine whether the disease settles into a controllable endemic state or transitions into a recurring outbreak regime, providing quantitative guidance for the design of reactive dengue control policies.
\end{abstract}
\begin{keyword}
Dengue transmission model \sep Non-smooth dynamical system \sep Hospital capacity \sep Threshold-based fogging intervention \sep Bifurcation analysis.
\end{keyword}

\end{frontmatter}


\section{Introduction}

Dengue is a mosquito-borne viral disease caused by four antigenically distinct serotypes of the dengue virus. It is primarily transmitted by infected female \textit{Aedes aegypti} mosquitoes. Its clinical manifestations range from asymptomatic infection and self-limiting febrile illness to severe dengue, which may involve plasma leakage, hemorrhage, organ impairment, shock, and death \cite{wilder2019dengue,PazBailey2024Dengue}. Over recent decades, dengue has expanded substantially in geographical range and incidence, driven by rapid urbanization, population mobility, climatic suitability, and the increasing distribution of competent mosquito vectors. Bhatt et al.\ estimated that there are approximately 390 million dengue infections per year, of which nearly 96 million manifest clinically \cite{bhatt2013global}. Subsequent global burden studies have confirmed that dengue causes considerable morbidity, mortality, and economic losses, particularly in tropical and subtropical regions \cite{stanaway2016global,shepard2016global}. Recently, the World Health Organization reported an unprecedented level of global transmission in 2024, with over 14 million reported cases, more than 52,000 severe cases, and over 11,000 deaths. These epidemiological trends emphasize the need for intervention strategies that effectively reduce transmission and are feasible under limited public health resources \cite{mondiale2025dengue,guzman2010dengue}.

{The clinical management of dengue primarily relies on early diagnosis, careful monitoring, and timely hospitalization of patients who exhibit warning signs or develop severe symptoms, as there is currently no specific antiviral treatment available \cite{wilder2019dengue,PazBailey2024Dengue}. Appropriate hospital-based case management can substantially reduce dengue mortality. However, large outbreaks can rapidly increase the demand for hospital beds, medical personnel, diagnostic services, and supportive treatment. When the number of patients requiring admission approaches or exceeds the hospital's capacity, delays in hospitalization and treatment can lead to worse outcomes. Therefore, hospital capacity should not be considered an unlimited external resource but rather a dynamic constraint that interacts with epidemic progression. Abdelrazek et al. incorporated limited medical resources into a dengue transmission model through a nonlinear recovery function. They demonstrated that resource saturation can generate multiple endemic equilibria, backward bifurcation, and oscillatory dynamics \cite{abdelrazec2016modeling}. Aldila et al. further demonstrated that, together with case detection and public awareness, hospital capacity can strongly influence prospects for dengue elimination \cite{aldila2023impact}. More generally, recent epidemic models with finite healthcare facilities show that the availability of hospital beds can modify both disease thresholds and qualitative dynamics. This supports the explicit inclusion of capacity-dependent hospitalization in transmission models \cite{misra2026multiobjective}. The latter study motivates the inclusion of the healthcare capacity component in the present framework and models admission as a function of infection prevalence and the number of unoccupied hospital beds.}

{Dengue is transmitted by mosquitoes, so controlling these mosquitoes is an important part of responding to outbreaks. Common measures include reducing sources, controlling larvae, eliminating water-holding containers, using biological controls, and applying adulticides through indoor residual spraying or space spraying, commonly referred to as fogging. Although fogging can rapidly reduce the adult mosquito population, its epidemiological impact depends on the timing, intensity, spatial coverage, mosquito susceptibility to insecticides, and frequency of application. Using a seasonally forced dengue model, Oki et al.\ showed that fogging is most effective when implemented between the beginning of the wet season and the epidemic prevalence peak. Delayed application, however, may have only a limited impact \cite{oki2011optimal}. Similarly, mathematical optimal-control studies have shown that appropriately timed insecticide application can reduce infected mosquito and human populations, though the required intensity of intervention may entail substantial operational costs \cite{rodrigues2010insecticide,rodrigues2013bioeconomic,fitria2017optimal}. These findings suggest that fogging should not merely be represented as a constant increase in mosquito mortality. Rather, its implementation should reflect the epidemiological information available to public health authorities.}

In practice, government responses to dengue are generally state-dependent. Intensive fogging campaigns are rarely maintained continuously due to financial constraints, logistical limitations, environmental concerns, and the risk of insecticide resistance. Instead, public health authorities typically intensify mosquito control when surveillance indicators, such as reported infections, hospitalized cases, severe cases, or mosquito indices, cross a prescribed alert level. The intervention is relaxed or discontinued when these indicators decline sufficiently. Thus, the control policy depends on the current state of the epidemic rather than only calendar time. De Lara and Sepúlveda-Salcedo formulated dengue vector control as a viability problem, selecting the fogging rate as a feedback function of the current proportions of infected humans and mosquitoes to maintain infection prevalence below a prescribed threshold \cite{de2016viable}. Such feedback formulations provide an important mathematical foundation for incidence-triggered interventions. However, a policy based on an explicit activation threshold can produce abrupt changes in the governing vector field and dynamical behavior that models employing only smooth, continuously varying control functions cannot adequately represent. Mathematical models have long been used to identify the mechanisms underlying dengue persistence and to compare alternative control measures. Classical host-vector models typically divide the human population into four classes: susceptible, exposed, infectious, and recovered. Mosquitoes are represented by three compartments: susceptible, exposed, and infectious. These models have been extended to incorporate seasonality, multiple serotypes, temporary cross-immunity, antibody-dependent enhancement, vaccination, human mobility, spatial heterogeneity, behavioral responses, and optimal vector control. The structural diversity of these models has been reviewed by Andraud et al.\ and more recently by Ogunlade et al.\ \cite{andraud2012dynamic,ogunlade2023systematic}. Although a substantial theoretical basis for dengue epidemiology is provided by this literature, hospitalization constraints and reactive vector-control policies are often investigated separately. Consequently, models with conventional smooth controls may overlook the interaction between finite hospital capacity and the discontinuous implementation of emergency mosquito control measures.

Motivated by these considerations, the present study develops a non-smooth mathematical model of dengue transmission that incorporates limited hospital capacity and a threshold-triggered fogging intervention. Hospital admission is limited by the number of available beds, so that epidemic growth directly affects access to treatment; conversely, healthcare saturation feeds back into disease progression by leaving a fraction of infected individuals untreated and hence infectious. Fogging is activated once the number of hospitalized infections reaches a predefined threshold, reflecting a public health policy that responds to the epidemic situation rather than one that is permanently applied. This framework allows us to examine how hospital saturation, the intervention threshold, and the fogging intensity jointly influence disease persistence, outbreak magnitude, and the frequency of public health interventions, and it provides a basis for identifying threshold values that prevent healthcare overload while avoiding unnecessarily frequent or prolonged fogging campaigns.

The remainder of this paper is organized as follows. Section~\ref{Sec2} presents the construction of the mathematical model, describing the underlying assumptions and the parameterization of the model based on existing literature. Section~\ref{Sec3} provides the analytical results, including the existence and stability analysis of the equilibria in each of operating regimes. Section~\ref{Sec4} presents the numerical investigation, employing continuation techniques to construct bifurcation diagrams and to explore the emergence and characteristics of periodic solutions. Finally, Section~\ref{Sec5} discusses the public health implications of these findings, along with the limitations of the present study and directions for future research.

\section{Model Formulation and Parameterization}\label{Sec2}
\subsection{The Mathematical Model}
As mentioned before, mathematical modeling has been widely used by many researchers to illustrate the dynamics of dengue transmission among human and mosquito populations. In general, such models divide the human and mosquito populations based on their health status or on the interventions incorporated into the model. Here, we present a host--vector model for dengue transmission, where the human population is denoted by $N_h$ and the mosquito population by $N_m$. We group the total human population into four compartments, namely the susceptible population $S(t)$, infected without symptoms (asymptomatic) population $A(t)$, infected with symptoms (symptomatic) population $I(t)$, and recovered population $R(t)$. Hence, we have
\[
N_h(t) = S(t) + A(t) + I(t) + R(t).
\]
On the other hand, the mosquito population is divided into only two compartments, namely the susceptible mosquito population $U(t)$ and the infected mosquito population $V(t)$. Hence, we have
\[
N_m(t) = U(t) + V(t).
\]
\newpage
\begin{figure}
\begin{tikzpicture}[
    x=1cm,y=1cm,
    >=Stealth,
    every node/.style={font=\fontsize{13}{15}\selectfont},
    comp/.style={
        draw={rgb,255:red,6; green,40; blue,58},
        line width=1.1pt,
        rounded corners=10pt,
        minimum width=3.1cm,
        minimum height=1.35cm,
        align=center,
        fill=white
    },
    purp/.style={
        draw={rgb,255:red,160; green,43; blue,147},
        line width=1.2pt,
        -{Stealth[length=7pt,width=7pt]}
    },
    blueDashed/.style={
        draw={rgb,255:red,25; green,101; blue,140},
        dashed,
        line width=1.1pt,
        -{Stealth[length=7pt,width=7pt]}
    },
    bigblack/.style={
        draw=black,
        fill=black,
        line width=1.2pt,
        -{Triangle[length=12pt,width=14pt]}
    }
]

\fill[gray!10] (-1.2,-2.7) rectangle (15.5,10.2);

\node[comp] (S) at (2.2,6.0) {Susceptible\\human};

\node[comp] (A) at (7.9,8.25) {Asymptomatic\\human};

\node[comp] (Y) at (7.9,4.25) {Symptomatic\\human};

\node[comp] (R) at (13.0,6.0) {Recovered\\human};

\node[comp] (SM) at (4.8,-1.7) {Susceptible\\mosquitoes};
\node[comp] (IM) at (11.1,-1.7) {Infected\\mosquitoes};

\draw[purp] (S.north) -- ++(0,2.0) -| (A.west);
\node[align=center] at (4.6,9.4) {Catch dengue,\\no symptoms};

\draw[purp] (S.south) -- ++(0,-1.0) -| (Y.west);
\node[align=center] at (4.55,2.95) {Catch dengue,\\show symptoms};

\draw[purp] (A.south) -- (Y.north);
\node[align=center] at (5.95,6.4) {Develop symptoms,\\hospitalized,\\or home\\treated};

\draw[purp] (A.east) -- ++(3.8,0) |- (R.north);
\node[align=center] at (11.9,8.8) {Recover\\naturally};

\draw[purp] (Y.east) -- ++(3.4,0) |- (R.south);
\node[align=center] at (11.05,3.0) {Recover due\\to treatment};

\draw[blueDashed] (-0.2,6.0) -- (S.west);
\node[anchor=west] at (-1.5,6.2) {Newborn};

\draw[blueDashed] ($(S.north west)+(0.18,-0.02)$) -- ++(-0.95,0.78);
\node[align=center] at (0.9,7.45) {Natural\\death};

\draw[blueDashed] (A.north) -- ++(0,1.15);
\node[align=center] at (8.85,9.7) {Natural\\death};

\draw[blueDashed] (R.east) -- ++(1.2,0);
\node[align=center] at (15.1,6.65) {Natural\\death};

\draw[blueDashed] ($(Y.south)+(-0.42,0)$) -- ++(0,-1.0);
\draw[blueDashed] ($(Y.south)+(0.55,0)$) -- ++(0,-1.0);
\node[align=center] at (6.95,2.15) {Natural\\death};
\node[align=center] at (8.95,2.25) {Diseased};

\node[
    ellipse,
    fill=red!90!black,
    text=white,
    minimum width=2.2cm,
    minimum height=1.2cm,
    align=center,
    font=\fontsize{12}{14}\selectfont
] (D) at (8.05,0) {Dengue\\cycles};

\draw[bigblack] (6.95,-1.15)
    .. controls (6.1,0.2) and (5.95,1.8) ..
    (6.15,2.55);

\draw[bigblack] (9.15,2.55)
    .. controls (9.15,1.55) and (10.25,0.0) ..
    (9.55,-1.05);

\draw[blueDashed] (1.6,-1.7) -- (SM.west);
\node[anchor=west] at (1.15,-1.35) {Newborn};

\draw[purp] (SM.east) -- (IM.west);
\node[align=center] at (7.95,-1.5) {Catch dengue};

\draw[blueDashed] ($(SM.south)+(-0.55,0)$) -- ++(0,-1.05);
\draw[blueDashed] ($(SM.south)+(0.55,0)$) -- ++(0,-1.05);
\node[align=center] at (3.85,-4) {Natural\\death};
\node[align=center] at (5.95,-4) {Fogging};

\draw[blueDashed] ($(IM.south)+(-0.55,0)$) -- ++(0,-1.05);
\draw[blueDashed] ($(IM.south)+(0.55,0)$) -- ++(0,-1.05);
\node[align=center] at (10.15,-4) {Natural\\death};
\node[align=center] at (12.15,-4) {Fogging};

\end{tikzpicture}
\caption{The transmission diagram of the model}\label{fig1}
\end{figure}

Our model follows the transmission diagram given in Fig.\ref{fig1} and is formulated under the following assumptions. First, there is no vertical transmission of the dengue virus in either the human or mosquito population. Additionally, there is no migration in either population. Both humans and mosquitoes are assumed to be born susceptible to dengue. We also assume homogeneous mixing between humans and mosquitoes, meaning that the vector has an equal probability of infecting any human individual. Furthermore, dengue infection in humans does not always present symptoms. Beyond the transmission dynamical properties, our model also considers the constraints of healthcare resources, which in this case are represented by the maximum hospital capacity, denoted by $C$. These limitations are expected to influence the disease outcomes, which will be accommodated in the infection process and will be discussed later in this section.

With these assumptions, the model is constructed as follows. The human population increases due to a constant natural birth rate, denoted by $\Lambda_h$, with all newborns entering the susceptible compartment. Susceptible individuals become infected with dengue after being bitten by infected mosquitoes $V$ at a constant infection rate $\beta_h$. We assume that newly infected individuals do not always develop dengue symptoms. A proportion $p$ of new infections will become asymptomatic individuals $A$, while the remaining proportion $1-p$ will directly move to the symptomatic compartment $I$.

Next, we assume that $\alpha$ represents the transition rate from the asymptomatic compartment $A$ to the symptomatic compartment $I$, due to the development of symptoms.

The natural recovery rate of asymptomatic individuals, driven by the immune system, is denoted by $\gamma_0$. On the other hand, individuals with symptoms require specific treatment, either in a hospital or through home-based care. We assume that the maximum hospital capacity is given by $C$. Hence, symptomatic individuals can receive hospital treatment only if the number of symptomatic cases is below this maximum capacity. If the number of symptomatic individuals exceeds the hospital capacity, they are assumed to undergo self-treatment at home. To accommodate this mechanism, we define the following function to model the recovery rate of symptomatic individuals:
\begin{equation}\label{recovery}
    f(I) =
    \begin{cases}
      \gamma_1 I, & \text{if } I < C, \\
      \gamma_1 C + \gamma_2 (I - C), & \text{if } I \geq C,
    \end{cases}
\end{equation}
where $\gamma_1$ represents the recovery rate due to hospitalization, while $\gamma_2$ denotes the recovery rate from home-based treatment, with $\gamma_1 > \gamma_2$. All recovered individuals move to the recovered compartment. Additionally, we assume that dengue may cause death at a constant rate $\delta$ only for symptomatic individuals. For the sake of analytic tractability, we assume this death rate to be the same for hospitalized or home-based individuals.

Next we will model the dynamic of the mosquitoes population. The mosquito population increases due to a constant birth rate, denoted by $\Lambda_m$, with all newborns entering the susceptible mosquito compartment $U$. Infection in the mosquito population occurs when susceptible mosquitoes $U$ bite asymptomatic infected individuals or symptomatic individuals who are not receiving hospital treatment, since we assume that symptomatic individuals who receiving hospital treatment are unable to transmit the dengue virus due to the clean environment in the hospital. To model this mechanism, we define the infection term in the mosquito population as follows:
\begin{equation}\label{inf}
    g(U,A,I) =
    \begin{cases}
      \beta_{m1} U A, & \text{if } I < C, \\
      \beta_{m1} U A + \beta_{m2} U (I - C), & \text{if } I \geq C,
    \end{cases}
\end{equation}
where $\beta_{m1}$ and $\beta_{m2}$ represent the infection rates from asymptomatic individuals and symptomatic individuals who are not hospitalized, respectively. Due to a typically higher viral load of symptomatic individuals, we assume $\beta_{m2}>\beta_{m1}$.

Another important feature of our model is the inclusion of fogging interventions to reduce the mosquito population. We assume that this fogging intervention is implemented when early warning signs of a dengue outbreak are present, which in our model are indicated by a minimum number of infected individuals being treated in the hospital. To model this fogging strategy, denoted by $h(I)$, we define the following fogging intervention:
\begin{equation}\label{fog}
    h(I) =
    \begin{cases}
     0, & \text{if } I < kC, \\
     \eta, & \text{if } I \geq  kC,
    \end{cases}
\end{equation}
where $\eta$ denotes the fogging rate and $k \in (0,1]$ represents the minimum percentage threshold of treated individuals in the hospital, which serves as an early warning indicator for dengue endemicity. For epidemiological interpretation, $kC$ represents an operational early warning indicator in the field, which reflects the level of hospital burden that triggers the fogging strategy by the authorities in a potential endemic area. When the number of hospitalized cases exceeds this threshold, fogging is activated rapidly to suppress the mosquito population. Therefore, a smaller value of $k$ indicates that public health authorities respond more quickly, since only a small number of hospitalized individuals is required to initiate the fogging intervention. On the other hand, a higher value of $k$ represents a situation in which the authorities respond relatively late, requiring a larger number of hospitalized individuals before the fogging intervention is put into place.

With the above model description, the mathematical model of dengue transmission, incorporating the number of hospitalized individuals and fogging interventions, is given by the following system of ordinary differential equations:
\begin{equation}\label{model}
    \begin{aligned}
        \dot{S} &= \Lambda_h - \beta_h S V - \mu_h S,  \\
    \dot{A} &= p \beta_h S V - \left(\alpha + \gamma_0 + \mu_h\right)A,  \\
    \dot{I} &= (1-p)\beta_h S V + \alpha A - f(I) - (\mu_h + \delta)I,  \\
    \dot{R} &= \gamma_0 A + f(I) - \mu_h R, \\
    \dot{U} &= \Lambda_m - g(U,A,I) - (\mu_m + h(I))U,  \\
    \dot{V} &= g(U,A,I) - (\mu_m + h(I))V,
    \end{aligned}
\end{equation}
where $f(I)$ denotes the recovery rate of symptomatic individuals and is defined in equation~\eqref{recovery}, $g(U,A,I)$ represents the infection term in the mosquito population and is given in equation~\eqref{inf}, and $h(I)$ denotes the fogging intervention, as defined in equation~\eqref{fog}.

\subsection{Model parameters}
Our model's parameter values are based on well-established epidemiological, biological, and demographic data. These parameters are fixed based on demographic reports or previous studies. This section provides biological motivation and literature references for the parameters that are used in our model.

System~\eqref{model} discusses dengue transmission by considering demographic effects, namely the recruitment rate through newborns, denoted by $\Lambda_h$, and the natural death rate, denoted by $\mu_h$. The total human population is given by $\dfrac{dN_h}{dt} = \Lambda_h - \mu_h N_h - \delta I.$ If $\delta = 0$, and assuming that the total human population is constant, then we have $\Lambda_h = \mu_h N_h$. From the literature, $\mu_h$ is approximately given as the reciprocal of the life expectancy of the population. In Indonesia, which is taken as our study case, the life expectancy is approximately $64.7$ years. Hence, we have $\mu_h = \dfrac{1}{64.7 \times 365}\ \text{day}^{-1}.$ Assuming that the total population in Jakarta, Indonesia, is $11{,}249{,}585$, we then obtain $\Lambda_h = \dfrac{11{,}249{,}585}{64.7 \times 365}\ \text{human day}^{-1}.$

As in the human population, we also consider demographic effects in the mosquito population. The total mosquito population is given by $\dfrac{dN_m}{dt} = \Lambda_m - (\mu_m + h(I)) N_m$. Assuming that the total mosquito population is constant when fogging is not implemented, we then have $\Lambda_m = \mu_m N_m$. Based on physiological studies of mosquitoes and field surveys, adult mosquitoes can live approximately between 7.7 and 40 days in a supportive environment, such as an average temperature around $27^\circ$C \cite{Brady2013,Joy2012,Goindin2015}. Hence, we have $\mu_m \in \left[\dfrac{1}{40}, \dfrac{1}{7.7}\right]$. Assuming that the ratio between humans and mosquitoes, in the absence of fogging and dengue-induced mortality, is $1:2$ \cite{WIJAYA201983,Aguiar2022DengueModels}, we then obtain $\Lambda_m \in \left[\dfrac{2 \times 11{,}249{,}585}{40}, \dfrac{2 \times 11{,}249{,}585}{15}\right]$.

The transmission of dengue from human to mosquito and vice versa is governed by the product between the mosquito biting rate and transmission rate from human to mosquito and vice versa. This formulation explicitly captures both the frequency of human–mosquito contacts and the biological efficiency of virus transmission, and is widely adopted in dengue transmission models. In several literature, the biting rate of mosquitoes is between 0.26 to 0.67 per day \cite{Aguiar2022DengueModels,Ndii2016WolbachiaDengue,Mishra2020DengueADE}, and even higher up to 1 bite per day in endemic region \cite{EstevaVargas1998Dengue,YakobClements2013DengueModel}. On the other hand, the transmission rate from mosquito to human lies between 0 to 0.5 per day, while from human to mosquitoes between 0 and 0.75 \cite{Aguiar2022DengueModels,ALDILA2024115729,WOODALL201467,Rashkov01012021}. Therefore, we have $\beta_h \in [0, 0.5]$ and $\beta_{m1}, \beta_{m2} \in [0,0.75]$.

The parameter $p \in [0,1]$ denotes the probability, that a new infection is asymptomatic, i.e.~showing no symptoms. Biologically, this reflects heterogeneity in host immune responses, such as prior immunity, cross-reactive antibodies from previous dengue exposure, age, and genetic factors may suppress clinical symptoms despite successful viral infection. In~\cite{ALDILA2024115729}, the authors approximate 27\% of new infections do not show symptoms. However, it is possible that a prior asymptomatic case may develop symptoms at a later stage, which is denoted by the rate of $\alpha \in [0,1]$ in our model.

Biologically, asymptomatic individuals typically experience milder infection and recover without medical intervention, leading to a natural recovery process. Hospitalized patients benefit from intensive clinical care and close medical supervision, resulting in the fastest recovery rate. In contrast, symptomatic individuals treated at home recover more slowly due to limited medical support and delayed intervention compared to hospitalized individuals. Although asymptomatic individuals have a milder symptoms (almost none) due to a lower viral load, but it does not mean they will recover faster. Hence, we assume  $\gamma_0\leq\gamma_2 \leq \gamma_1$. From previous research by authors in \cite{ALDILA2024115729,WOODALL201467,XUE2021125742,Rashkov01012021}, dengue infected individuals can recover between 2 weeks to 3 days.

The hospital bed ratio per 1,000 people, as compiled by the World Health Organization (WHO) and reported in global health statistics, represents the average number of available hospital beds per 1,000 individuals in a given country or area \cite{hospitalBedsWHO}. In Indonesia, this ratio is approximately 1.2 beds per 1,000 people \cite{hospitalBedsIndonesia}, while in DKI Jakarta, the capital region, the ratio has been reported at around 2.6 beds per 1,000 people \cite{hospitalBedsJakarta}. However, it is important to note that not all hospital beds are allocated to dengue patients. Therefore, we assume that only a fraction of the total hospital capacity is effectively available for dengue treatment. In this study, this fraction is assumed to lie between 2\% and 10\%, which is consistent with realistic hospital resource allocation in the context of competing healthcare needs. Therefore, we have $C \in [584, 2924]$. The parameter $k$ in our model represent the hospital capacity threshold that will triggers fogging implementation in the field. In urban settings such as Jakarta as an example, intervention is typically initiated before the hospital is fully occupied. Hence, we assume that $k \in [0.1]$, and use $0.6$ as the baseline parameter to represent the medium response of the government. Smaller $k$ represent more responsive the government to respond dengue with fogging.

Last but not least, the death rate induced by dengue, which in our model is denoted by $\delta$. According to \cite{Schaefer2025DengueFever}, the case fatality rate (CFR) of severe dengue is reported to be 2–5\% with treatment and up to 10–20\% if untreated. On the other hand, global surveillance data in 2024 \cite{HAIDER2025107940} reported that the CFR reached 0.05–0.07\%. In Indonesia, the CFR reached 0.4–0.8\% \cite{KemenkesPusatKrisisDBD2026}. Since the infection period of dengue is between 7–14 days \cite{ALDILA2024115729}, then $\delta$ can be estimated as the ratio between the CFR and the infection period. Hence, $\delta \in \left[\dfrac{0.4\%}{14}, \dfrac{20\%}{7}\right] = [0.00028, 0.028]$.

\begin{landscape}
\begin{table}
    \centering
    \begin{tabular}{cp{10cm}ccp{2cm}}
        \hline Par. & Description & Interval & Ref.&Baseline\\ \hline
         $\Lambda_h$ & Recruitment rate of human (human $\cdot$ day $^{-1}$) & $ \left[\frac{11249585}{70 \times 365}, \frac{11249585}{50 \times 365} \right]$  & \cite{ALDILA2024115729,asamoah2022optimal,muharram2024indonesia,WOODALL201467,XUE2021125742,Rashkov01012021,Ghosh2020TwoStrainDengue}& $\frac{11249585}{64.7 \times 365}$\\
         $\Lambda_m$ & Recruitment rate of mosquito (mosquito $\cdot$ day $^{-1}$)& $\left[ \frac{2 \times 11249585}{40}, \frac{2 \times 11249585}{7.7} \right]$ & \cite{ALDILA2024115729,WIJAYA201983,Aguiar2022DengueModels,WOODALL201467,XUE2021125742,Rashkov01012021,Ghosh2020TwoStrainDengue} & $\frac{2 \times 11249585}{21}$ \\
         $\mu_h$ & Natural death rate of human ( day $^{-1}$)& $\left[\frac{1}{70 \times 365},\frac{1}{50 \times 365}\right]$ & \cite{ALDILA2024115729,asamoah2022optimal,muharram2024indonesia,WOODALL201467,XUE2021125742,Rashkov01012021,Ghosh2020TwoStrainDengue} &$\frac{1}{64.7 \times 365}$ \\
         $\mu_m$ & Natural death rate of mosquito ( day $^{-1}$)& $\left[\frac{1}{40}, \frac{1}{7.7}\right]$ & \cite{ALDILA2024115729,WOODALL201467,XUE2021125742,Rashkov01012021,Ghosh2020TwoStrainDengue} & $\frac{1}{21}$ \\
         $\beta_h$ & Transmission rate from mosquito to human ( (mosquito $\cdot$ day) $^{-1}$)& [0,1] & \cite{ALDILA2024115729,WOODALL201467,Rashkov01012021} & $\frac{0.472}{11249585}$\\
         $\beta_{m1}$ & Transmission rate from asymptomatic individual to mosquito ( (human $\cdot$ day) $^{-1}$) & [0,1] & \cite{ALDILA2024115729,MAIER201715,ANGGRIANI201962,BOCK2019108219} & $\frac{0.063}{11249585}$\\
         $\beta_{m2}$ & Transmission rate from symptomatic individual to mosquito ( (human $\cdot$ day) $^{-1}$) & $\beta_{m2}\geq \beta_{m1}$ & \cite{ALDILA2024115729} & $\frac{5\times0.063}{11249585}$\\
         $p$ & Fraction of new infected human develop to asymptomatic class & $[0,1]$ & \cite{ALDILA2024115729}  & 0.269\\
         $\alpha$ & Transition rate from asymptomatic to symptomatic class ( day $^{-1}$) & [0,1] & \cite{ALDILA2024115729,BOCK2019108219, Knerer2020DengueThailand,CHAMPAGNE20191} & 0.093\\

         $\gamma_0$ & Recovery rate of asymptomatic individual ( day $^{-1}$) & $\left[\frac{1}{14},\frac{1}{3}\right]$ & \cite{ALDILA2024115729,WOODALL201467,XUE2021125742,Rashkov01012021} & $\frac{1}{14}$ \\
         $\gamma_1$ & Recovery rate of symptomatic individual due to hospitalization ( day $^{-1}$) & $\left[\frac{1}{14},\frac{1}{3}\right]$  & \cite{ALDILA2024115729,WOODALL201467,XUE2021125742,Rashkov01012021} & $\frac{1}{10}$\\
         $\gamma_2$ & Recovery rate of symptomatic individual due to home treatment ( day $^{-1}$) & $\left[\frac{1}{14},\frac{1}{3}\right]$ & \cite{ALDILA2024115729,WOODALL201467,XUE2021125742,Rashkov01012021} & $\frac{1}{12}$\\

         $k$ & The proportion of hospital capacity at which fogging is initiated. & $(0,1]$ & Assumption & 0.6\\
         $C$ & The maximum hospital capacity for dengue patients (human) & [584,2924] & \cite{hospitalBedsJakarta} & 1500 \\
         $\eta$  & Fogging rate ( day $^{-1}$) & [0,1] & \cite{NAALY2024100159,MAYEDA2026508} &0.01\\
         $\delta$ & Death rate induced by dengue & [0.00028, 0.028]& \cite{Schaefer2025DengueFever,HAIDER2025107940,KemenkesPusatKrisisDBD2026} & $0.0015$\\
         \hline
    \end{tabular}
    \caption{Model parameters of model in~\eqref{model}.} 
    \label{tab:placeholder}
\end{table}
\end{landscape}

\section{Model analysis}\label{Sec3}
From the model description in the previous section, we can divide our model into three cases, namely when $I(t)< kC$, $kC\leq I(t)<C$, and when $I(t)>C$. Here below we give a mathematical analysis for each model regarding their existence of equilibria, the basic reproduction number, and the local stability criteria for each equilibrium point.

\subsection{Case when $I(t)< kC$}
When the number of infected individuals are less than the first threshold ($kC$, with $k \in (0,1)$), model~\eqref{model} read as follows:
\begin{eqnarray}\label{modela}
    \dot{S} &=& \Lambda_h - \beta_h S V - \mu_h S, \nonumber \\
    \dot{A} &=& p \beta_h S V - \left(\alpha + \gamma_0 + \mu_h\right)A, \nonumber \\
    \dot{I} &=& (1-p)\beta_h S V + \alpha A - \gamma_1 I - (\mu_h + \delta)I, \nonumber \\
    \dot{R} &=& \gamma_0 A + \gamma_1 I - \mu_h R, \\
    \dot{U} &=& \Lambda_m - \beta_{m1}UA - \mu_m U, \nonumber \\
    \dot{V} &=& \beta_{m1}UA - \mu_m V. \nonumber
\end{eqnarray}
For any non-negative initial condition of $S, A, I, R, U,$ and $V$, it can be shown that the direction of vector field in the boundary region of $\mathbb{R}_+^6$ always has an inward direction. This means that all solutions will always remain positive for any $t>0$.

Model~\eqref{modela} has two types of equilibria points. The first equilibrium point is the dengue-free equilibrium point, which is given by:
\begin{equation}\label{dfe}
    \mathcal{E}_0 = \left( S^0,A^0,I^0,R^0,U^0,V^0\right) = \left(\frac{\Lambda_h}{\mu_h}, 0, 0, 0, \frac{\Lambda_m}{\mu_m}, 0\right).
\end{equation}
The second equilibrium point is the endemic equilibrium point, which is given by:
\begin{subequations}
\begin{align}
    \mathcal{E}_1 &= \left(S^\dagger, A^\dagger, I^\dagger, R^\dagger, U^\dagger, V^\dagger\right), \label{Eq:E1}
\intertext{where}
    S^\dagger &= S^0 \cdot \frac{\mathcal{R}_0^2 \mu_h\mu_m + \Lambda_m \beta_h}{\mathcal{R}_0^2(\mu_h \mu_m + \Lambda_m \beta_h)}\;, \\
    A^\dagger &= (\mathcal{R}_0^2-1)\cdot \frac{\mu_h \mu_m^2}{\beta_{m1}(\mu_h \mu_m + \Lambda_m \beta_h)}\;, \\
    I^\dagger &= A^\dagger \cdot \frac{\alpha + (1-p)(\gamma_0+\mu_h)}{p (\gamma_1+\mu_h+\delta)}\;, \\
    R^\dagger &= A^\dagger\cdot \left( \frac{\gamma_0}{\mu_h} + \frac{\gamma_1}{\mu_h} \frac{\alpha + (1-p)(\gamma_0+\mu_h)}{p (\gamma_1+\mu_h+\delta)} \right) \;, \\
    U^\dagger &= U^0 \cdot \frac{\mu_h \mu_m + \Lambda_m \beta_h}{\mathcal{R}_0^2 \mu_h\mu_m + \Lambda_m \beta_h}\;, \\
    V^\dagger &= U^0 (\mathcal{R}_0^2-1)\cdot \frac{\mu_h \mu_m}{\mathcal{R}_0^2 \mu_h\mu_m + \Lambda_m \beta_h}\;.
\end{align}
\end{subequations}
Here, $\mathcal{R}_0 = \sqrt{p\, \dfrac{\Lambda_h \Lambda_m \beta_h \beta_{m1}}{\mu_h \mu_m^2 (\alpha+\gamma_0+\mu_h)}}$ denotes the basic reproduction number of the respected dengue model~\eqref{modela}. For $\mathcal{R}_0<1$ the infected compartments of the endemic equilibrium would be negative, i.e. they do not exist. Similarly, one can directly see, that $S^\dagger<S_0$ and $U^\dagger<U^0$ because the second factor is $<1$ for both terms in case of $\mathcal{R}_0>1$.

The basic reproduction number in the context of our dengue model represents the expected number of secondary cases of dengue caused by one primary case of infection during its infection period in a completely susceptible population. The basic reproduction number is taken from the spectral radius of the respected next-generation matrix, see \cite{van2002reproduction, diekmann2010construction}, of our model in~\eqref{modela}.
We note, that in the special case of $p=0$, we obtain $\mathcal{R}_0=0$; that aligns with the fact, that only for $p>0$, we get asymptomatic infections. And asymptomatic infections are required for the transmission cycle between humans and vectors.

Based on the above calculation, we have the following theorem to describe the existence criteria of the equilibria.
\begin{theorem}
    The dengue model~\eqref{modela} always has a dengue-free equilibrium. Furthermore, it always has a unique endemic equilibrium if $\mathcal{R}_0>1$ and no endemic equilibrium otherwise, where the basic reproduction number is given by $\mathcal{R}_0= \sqrt{p \dfrac{\Lambda_h \Lambda_m \beta_h \beta_{m1}}{\mu_h \mu_m^2 (\alpha+\gamma_0+\mu_h)}}$.
\end{theorem}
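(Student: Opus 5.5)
Setting the right-hand side of \eqref{modela} to zero, I will reduce the six equilibrium equations to a single scalar equation in one infected variable. The plan is to solve the "linear" compartments explicitly and show the remaining equation is linear in $A$ after factoring out the trivial root.

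First, the dengue-free equilibrium. With $A=I=V=0$ the equations force $S=\Lambda_h/\mu_h$, $R=0$ and $U=\Lambda_m/\mu_m$. This gives $\mathcal{E}_0$ in \eqref{dfe}, which exists for every admissible parameter set.

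For an equilibrium with $V\neq 0$, I will express everything in terms of $A$. From the $U$- and $V$-equations, $U=\Lambda_m/(\beta_{m1}A+\mu_m)$ and
\[
V=\frac{\beta_{m1}\Lambda_m A}{\mu_m(\beta_{m1}A+\mu_m)}.
\]
From the $S$-equation, $S=\Lambda_h/(\beta_h V+\mu_h)$. The $A$-equation then reads $p\beta_h S V=(\alpha+\gamma_0+\mu_h)A$. After substituting $V$, this becomes $A\,\Phi(A)=0$, where
\[
\Phi(A)=\frac{p\beta_h\Lambda_h\beta_{m1}\Lambda_m}{\mu_m\left(\beta_h\beta_{m1}\Lambda_m A/\mu_m+\mu_h(\beta_{m1}A+\mu_m)\right)}-(\alpha+\gamma_0+\mu_h).
\]
The denominator is affine and strictly increasing in $A$, so $\Phi$ is strictly decreasing on $[0,\infty)$ and tends to $-(\alpha+\gamma_0+\mu_h)<0$. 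Moreover, $\Phi(0)=(\alpha+\gamma_0+\mu_h)(\mathcal{R}_0^2-1)$ by the definition of $\mathcal{R}_0$. Hence a positive root exists, and is then unique, if and only if $\mathcal{R}_0>1$.

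Solving the linear equation $\Phi(A)=0$ explicitly gives $A^\dagger$, which I expect to agree with the paper's formula. The remaining components then follow:
\begin{itemize}
\item $I^\dagger$ from the $I$-equation, using $(1-p)\beta_h SV=\tfrac{1-p}{p}(\alpha+\gamma_0+\mu_h)A$;
\item $R^\dagger$ from the $R$-equation;
\item $U^\dagger,V^\dagger,S^\dagger$ from the expressions above.
\end{itemize}
All of these are positive when $A^\dagger>0$. Conversely, if $\mathcal{R}_0\le1$, any equilibrium must have $A=0$, which forces $V=0$ and hence the dengue-free equilibrium. So no endemic equilibrium exists in that case.

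The main obstacle is bookkeeping rather than any conceptual step: matching the closed forms with the stated $\mathcal{E}_1$, in particular $S^\dagger$ and $V^\dagger$ written via $\mathcal{R}_0^2$. I would verify these by substituting $A^\dagger$ and simplifying with $\mathcal{R}_0^2\mu_h\mu_m^2(\alpha+\gamma_0+\mu_h)=p\Lambda_h\Lambda_m\beta_h\beta_{m1}$.

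Two further points need explicit mention. The case $p=0$ gives $\mathcal{R}_0=0$ and only the trivial root, consistent with the statement. Positivity of $I^\dagger$ requires $p>0$, which is implied by $\mathcal{R}_0>1$.
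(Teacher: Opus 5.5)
Your proposal is correct and takes essentially the same route as the paper, which also solves the equilibrium equations in closed form with every infected component proportional to $\mathcal{R}_0^2-1$ (your $A^\dagger,\dots,V^\dagger$ agree with \eqref{Eq:E1}). Your reduction to $A\,\Phi(A)=0$ with $\Phi$ strictly decreasing on $[0,\infty)$ makes explicit the uniqueness and the nonexistence for $\mathcal{R}_0\le 1$, which the paper only asserts by noting that its formulas become negative. One small correction: $I^\dagger>0$ requires $\alpha+(1-p)(\gamma_0+\mu_h)>0$, not $p>0$, since $I^\dagger$ vanishes when $p=1$ and $\alpha=0$; the theorem is unaffected because $A^\dagger,V^\dagger>0$ still make the equilibrium endemic.
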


\textbf{Local Stability criteria of Dengue-free equilibrium and the endemic equilibrium.}

The establishment of the local stability of the Dengue-free equilibrium is presented in the following theorem.
\begin{theorem}
    Dengue-free equilibrium is locally asymptotically stable when $\mathcal{R}_0<1$ and unstable if $\mathcal{R}_0>1$.
\end{theorem}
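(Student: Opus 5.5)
We linearize system~\eqref{modela} at $\mathcal{E}_0$ and show that all eigenvalues of the Jacobian have negative real part exactly when $\mathcal{R}_0<1$. At $\mathcal{E}_0$ we have $V=A=0$, $S=S^0=\Lambda_h/\mu_h$ and $U=U^0=\Lambda_m/\mu_m$. The Jacobian is therefore block-triangular once the variables are ordered as infected $(A,V)$, then $I$, then the uninfected $(S,R,U)$.

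The rows for $S$, $R$ and $U$ contribute only diagonal entries $-\mu_h$, $-\mu_h$ and $-\mu_m$ in the uninfected block; their entries coupling to $A,I,V$ lie off the block diagonal. The $I$-row contributes the eigenvalue $-(\gamma_1+\mu_h+\delta)$, because $I$ enters neither the $A$, $V$ nor $U$ equations in this regime. Hence three, respectively four, eigenvalues are explicitly negative.

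The remaining eigenvalues come from the $2\times2$ block for $(A,V)$:
\[
J_{AV}=\begin{pmatrix} -(\alpha+\gamma_0+\mu_h) & p\beta_h S^0\\ \beta_{m1}U^0 & -\mu_m\end{pmatrix}.
\]
Its trace is negative. Its determinant is
\[
\mu_m(\alpha+\gamma_0+\mu_h)-p\beta_h\beta_{m1}S^0U^0=\mu_m(\alpha+\gamma_0+\mu_h)\bigl(1-\mathcal{R}_0^2\bigr),
\]
using the formula for $\mathcal{R}_0$ stated in the preceding theorem. By the Routh--Hurwitz criterion in dimension two, both eigenvalues have negative real part iff $\det J_{AV}>0$, that is, iff $\mathcal{R}_0<1$. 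If $\mathcal{R}_0>1$, the determinant is negative, so there is a real positive eigenvalue and $\mathcal{E}_0$ is unstable.

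The only care needed is to verify the block structure precisely, in particular that $\partial\dot U/\partial I=0$ and $\partial\dot A/\partial I=0$ in regime~\eqref{modela}, so that the characteristic polynomial factors as stated. One should also note that $\mathcal{E}_0$ lies in the interior of the region $I<kC$ (since $I^0=0<kC$), so the smooth system~\eqref{modela} governs a neighbourhood of it and linearization is legitimate despite the non-smoothness of~\eqref{model}.

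Alternatively, the result follows directly from the next-generation framework of \cite{van2002reproduction}: the $(A,V)$ infection subsystem has $F=\begin{pmatrix}0&p\beta_hS^0\\ \beta_{m1}U^0&0\end{pmatrix}$ and $V=\operatorname{diag}(\alpha+\gamma_0+\mu_h,\mu_m)$, so $\rho(FV^{-1})=\mathcal{R}_0$, and their Theorem~2 gives exactly the claimed dichotomy. We would present the explicit Jacobian argument and mention this as a consistency check.
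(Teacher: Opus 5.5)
Your proposal is correct and follows the same route as the paper. Both linearize \eqref{modela} at $\mathcal{E}_0$, read off the eigenvalues $-\mu_h,-\mu_h,-\mu_m,-(\gamma_1+\mu_h+\delta)$, and reduce everything to the $2\times 2$ block in $(A,V)$, whose stability is equivalent to $p\beta_h\beta_{m1}\Lambda_h\Lambda_m/(\mu_h\mu_m)<(\alpha+\gamma_0+\mu_h)\mu_m$, i.e.\ $\mathcal{R}_0<1$. The differences are minor: you use the trace--determinant test where the paper writes $\lambda_{5,6}$ explicitly (its displayed formula has a spurious leading minus sign, which your route avoids), and you state the instability case and the fact that $\mathcal{E}_0$ lies in the interior of $I<kC$ explicitly, which the paper leaves implicit.
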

\begin{proof}
After substituting the Dengue-free equilibrium $\mathcal{E}_0$ into the Jacobian matrix for the system~\eqref{modela}, we calculate the following six eigenvalues
\begin{align*}
    \lambda_{1,2}(\mathcal{E}_0) &= -\mu_h\;,\\
    \lambda_3(\mathcal{E}_0) &= -\mu_m\;,\\
    \lambda_4(\mathcal{E}_0) &= -(\gamma_1+\mu_h+\delta)\;, \\
    \lambda_{5,6}(\mathcal{E}_0) &= -\frac{1}{2} \left( -(\alpha+\gamma_0+\mu_h+\mu_m) \pm \sqrt{(\alpha+\gamma_0+\mu_h-\mu_m)^2+4p\beta_h \beta_{m1}\frac{\Lambda_h\Lambda_m}{\mu_h\mu_m}} \right)\;.
\end{align*}
It is easy to see, that all eigenvalues are negative provided $p\beta_h \beta_{m1}\frac{\Lambda_h\Lambda_m}{\mu_h\mu_m} < (\alpha+\gamma_0+\mu_h)\mu_m$, i.e.~if $\mathcal{R}_0<1$.

\end{proof}

The following theorem offers an approach for analyzing the local stability of the endemic equilibrium of the system \ref{modela}.

\begin{theorem}
The endemic equilibrium of~\eqref{modela} exhibits local asymptotic stability if $\mathcal{R}_0>1$.
\end{theorem}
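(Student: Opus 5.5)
The approach is to reduce the six-dimensional Jacobian at $\mathcal{E}_1$ to a three-dimensional core and then apply the Routh--Hurwitz criterion to that block.

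\textbf{Step 1 (decoupling).} In \eqref{modela} the variables $R$ and $I$ do not feed back into the other equations. Ordering the variables as $(S,A,U,V,I,R)$ makes the Jacobian block lower triangular. This yields the eigenvalues $-\mu_h$ (from $R$) and $-(\gamma_1+\mu_h+\delta)$ (from $I$).

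Next, write $N_m=U+V$, so that $\dot N_m=\Lambda_m-\mu_m N_m$. Using $(S,A,N_m,V)$ instead of $(S,A,U,V)$ splits off a further eigenvalue $-\mu_m$. What remains is the three-dimensional system in $(S,A,V)$ with $U=\Lambda_m/\mu_m-V$:
\[
\dot S=\Lambda_h-\beta_h SV-\mu_h S,\quad \dot A=p\beta_h SV-k_1A,\quad \dot V=\beta_{m1}(U^0-V)A-\mu_m V,
\]
where $k_1=\alpha+\gamma_0+\mu_h$ and $U^0=\Lambda_m/\mu_m$.

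\textbf{Step 2 (reduced Jacobian).} At $\mathcal{E}_1$ the reduced Jacobian is
\[
J=\begin{pmatrix} -(\beta_hV^\dagger+\mu_h) & 0 & -\beta_hS^\dagger\\ p\beta_hV^\dagger & -k_1 & p\beta_hS^\dagger\\ 0 & \beta_{m1}U^\dagger & -(\beta_{m1}A^\dagger+\mu_m)\end{pmatrix}.
\]
Write $a=\beta_hV^\dagger+\mu_h$ and $d=\beta_{m1}A^\dagger+\mu_m$. The equilibrium relations give $p\beta_hS^\dagger\beta_{m1}U^\dagger=k_1\mu_m$, since $A^\dagger=p\beta_hS^\dagger V^\dagger/k_1$ and $\beta_{m1}U^\dagger A^\dagger=\mu_m V^\dagger$.

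With this, the characteristic polynomial is $\lambda^3+a_2\lambda^2+a_1\lambda+a_0$, where
\[
a_2=a+k_1+d,\qquad a_1=ak_1+ad+k_1d-k_1\mu_m=ak_1+ad+k_1\beta_{m1}A^\dagger,
\]
\[
a_0=a(k_1d-k_1\mu_m)+p\beta_h^2V^\dagger S^\dagger\beta_{m1}U^\dagger=ak_1\beta_{m1}A^\dagger+k_1\mu_m\beta_hV^\dagger.
\]
Every coefficient is a sum of positive terms whenever $A^\dagger,V^\dagger>0$. By the existence theorem above, this is exactly the condition $\mathcal{R}_0>1$.

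\textbf{Step 3 (Routh--Hurwitz).} It remains to show $a_2a_1>a_0$. Expanding $a_2a_1$ produces, among other positive terms, $a\cdot ak_1$, which I pair against nothing; the relevant terms are $k_1\cdot ak_1$ and $a\cdot k_1\beta_{m1}A^\dagger$. The second of these exactly cancels the first term of $a_0$. The second term of $a_0$, namely $k_1\mu_m\beta_hV^\dagger$, is dominated by the term $k_1\cdot ad\ge k_1\,\beta_hV^\dagger\,\mu_m$ that also appears in $a_2a_1$. Hence $a_2a_1-a_0>0$, all eigenvalues of $J$ have negative real parts, and together with the three eigenvalues from Step 1 this gives local asymptotic stability.

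The main obstacle is the bookkeeping in Step 3: one must use the equilibrium identity from Step 2 correctly so that the potentially negative contribution $-k_1\mu_m$ in the $(A,V)$ minor cancels. I would verify that cancellation first, since the sign structure of everything else depends on it.
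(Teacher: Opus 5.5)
Your proof is correct and follows the same route as the paper. Both split off $-\mu_h$, $-(\gamma_1+\mu_h+\delta)$ and $-\mu_m$, then apply Routh--Hurwitz to the remaining cubic, and your constant term agrees with the paper's $a_3=(\alpha+\gamma_0+\mu_h)(\mathcal{R}_0^2-1)\mu_h\mu_m$.

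Writing the coefficients through $A^\dagger,V^\dagger$ and the identity $p\beta_hS^\dagger\beta_{m1}U^\dagger=k_1\mu_m$ is what makes your Hurwitz step sound: only $a\,k_1\beta_{m1}A^\dagger$ and $k_1ad$ are needed, and the term $k_1\cdot ak_1$ you list plays no role. The paper's chain does not close as written. Its middle coefficient disagrees with the Jacobian: at $\mathcal{R}_0=1$ it gives $k_1\mu_h$ instead of $\mu_h(k_1+\mu_m)$. Its last step also asserts $\frac{\mu_h\mu_m\beta_h\Lambda_m}{\mu_h\mu_m+\beta_h\Lambda_m}>\mu_h\mu_m$, which is reversed.
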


\begin{proof}
By substituting the endemic equilibrium, $\mathcal{E}_1$, into the Jacobian matrix for the system, we first find three eigenvalues which they are $-\mu_{h}, -(\gamma_{1}+\mu_{h}+\delta), -\mu_{m}$. Clearly, they are negative, to check for the others, we analyze the following reduced characteristic polynomial that can be obtained after some lengthy but straightforward calculations
\begin{equation}
    \label{E:red-char-poly}
    \lambda^3+a_1 \lambda^2+a_2 \lambda+a_3=0\;,
\end{equation}
where 
\begin{align*}
    a_1 &= (\alpha+\gamma_0+\mu_m+2\mu_h)+ \mu_h (\mathcal{R}_0^2-1) \left[\frac{\beta_h \Lambda_m}{\mathcal{R}_0^2 \mu_h \mu_m + \beta_h \Lambda_m} + \frac{\mu_m^2}{\mu_h \mu_m + \Lambda_m \beta_h}\right] \;, \\
    a_2 &= (\alpha+\gamma_0+\mu_h) \frac{(\mathcal{R}_0^2-1) \mu_h \mu_m^2+\mu_h(\mu_h \mu_m + \beta_h \Lambda_m)}{\mathcal{R}_0^2 \mu_h \mu_m + \beta_h \Lambda_m}
    + (\mathcal{R}_0^2-1) \frac{\mu_h \mu_m \beta_h \Lambda_m}{\mu_h \mu_m + \beta_h \Lambda_m}\;, \\
    a_3 &= (\mathcal{R}_0^2-1)\mu_h \mu_m (\alpha + \gamma_0 + \mu_h) \;.
\end{align*}
It is obvious to see, that all three coefficients $a_1, a_2, a_3>0$, if $\mathcal{R}_0>1$. Moreover,
\begin{equation*}
    a_1 a_2 > (\alpha+\gamma_0+\mu_h) \dot (\mathcal{R}_0^2-1) \frac{\mu_h \mu_m \beta_h \Lambda_m}{\mu_h \mu_m + \beta_h \Lambda_m} > (\alpha+\gamma_0+\mu_h) \dot (\mathcal{R}_0^2-1) \frac{\mu_h \mu_m \beta_h \Lambda_m}{ \beta_h \Lambda_m} = a_3\;.
\end{equation*}
Here, we have just used the part $(\alpha+\gamma_0+\mu_h)$ from the $a_1$--factor and the last term in the $a_2$--factor, ignoring all the other, positive, terms.
Hence, all conditions of the Routh--Hurwitz criterion \cite{Bavafa2017}, are satisfied and we can conclude, that all roots of the cubic equation~\eqref{E:red-char-poly} have negative real part, i.e.~the endemic equilibrium $\mathcal{E}_1$ is locally asymptotically stable when $\mathcal{R}_0 > 1$.
\end{proof}

\subsection{Case when $kC \leq I(t) < C$}

When the number of symptomatic infected individuals satisfies $kC \le I(t)<C$, the fogging intervention is active, i.e.\ $h(I)=\eta$, while hospital capacity is not yet exceeded, hence $f(I)=\gamma_1 I$. Moreover, since $I(t)<C$, the mosquito infection term remains $g(U,A,I)=\beta_{m1}UA$. Therefore, system~\eqref{model} reduces to
\begin{eqnarray}\label{modelb}
\dot{S} &=& \Lambda_h - \beta_h S V - \mu_h S, \nonumber\\
\dot{A} &=& p \beta_h S V - \left(\alpha + \gamma_0 + \mu_h\right)A, \nonumber\\
\dot{I} &=& (1-p)\beta_h S V + \alpha A - \gamma_1 I - (\mu_h + \delta)I, \nonumber\\
\dot{R} &=& \gamma_0 A + \gamma_1 I - \mu_h R, \nonumber\\
\dot{U} &=& \Lambda_m - \beta_{m1}UA - (\mu_m+\eta)\,U, \nonumber\\
\dot{V} &=& \beta_{m1}UA - (\mu_m+\eta)\,V.
\end{eqnarray}

As in the previous case, positivity of solutions follows by checking that the vector field on each coordinate hyperplane points inward, standard for compartmental ODE models \cite{hethcote2000mathematics,smith1995theory}. Local existence and uniqueness hold in this regime because the right-hand side of \eqref{modelb} is locally Lipschitz (polynomial) in $(S,A,I,R,U,V)$, hence Picard--Lindel\"of applies \cite{coddington1955theory}.

The endemic equilibrium for the case $kC\le I \le C$ can be obtained by replacing $\mu_m$ with $\mu_m + \eta$ and $\mathcal{R}_0$ with $\mathcal{R}_{0\eta}$ in $\mathcal{E}_1$, see Eq. \eqref{Eq:E1}, where
\begin{equation}
    \mathcal{R}_{0\eta} = \frac{1}{1+\eta/\mu_m} \mathcal{R}_{0}
\end{equation}
is the basic reproduction number of the respected dengue model in~\eqref{modelb}. It is obvious to see, that $\mathcal{R}_{0\eta}<\mathcal{R}_0$ for $\eta>0$.

\begin{theorem}
    The endemic equilibrium of model \eqref{modelb} exhibits local asymptotic stability if the basic reproduction number, $\mathcal{R}_{0\eta}>1$.
\end{theorem}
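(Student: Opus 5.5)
Model~\eqref{modelb} differs from model~\eqref{modela} only in that the mosquito removal rate $\mu_m$ is replaced by $\mu_m+\eta$ in the $U$- and $V$-equations. The constant recruitment $\Lambda_m$ is unchanged. The plan is therefore to reduce the statement to the preceding theorem on the stability of $\mathcal{E}_1$ by a parameter substitution, rather than redoing the Jacobian analysis.

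\textbf{Step 1: the substitution.} Define $\tilde\mu_m=\mu_m+\eta$ and regard \eqref{modelb} as system \eqref{modela} with parameter $\tilde\mu_m$. The mosquito death rate $\mu_m$ appears nowhere else in \eqref{modela}, so the two vector fields coincide identically. Hence the endemic equilibrium of \eqref{modelb} is $\mathcal{E}_1$ with $\mu_m$ replaced by $\tilde\mu_m$, and its Jacobian is the Jacobian of \eqref{modela} at $\mathcal{E}_1$ with the same replacement.

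\textbf{Step 2: the reproduction number.} Substituting $\tilde\mu_m$ into the formula for $\mathcal{R}_0$ gives the corresponding reproduction number
\[
\tilde{\mathcal{R}}_0^2=\frac{p\Lambda_h\Lambda_m\beta_h\beta_{m1}}{\mu_h(\mu_m+\eta)^2(\alpha+\gamma_0+\mu_h)}=\Big(\frac{\mu_m}{\mu_m+\eta}\Big)^2\mathcal{R}_0^2 .
\]
Thus $\tilde{\mathcal{R}}_0=\mathcal{R}_0/(1+\eta/\mu_m)=\mathcal{R}_{0\eta}$. The hypothesis $\mathcal{R}_{0\eta}>1$ is therefore exactly the hypothesis of the earlier theorem for the substituted system, and it also guarantees that the equilibrium has positive infected components.

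\textbf{Step 3: stability.} The earlier theorem used only positivity of the parameters, in particular $\mu_m>0$, together with $\mathcal{R}_0>1$. Three eigenvalues were obtained explicitly as $-\mu_h$, $-(\gamma_1+\mu_h+\delta)$ and $-\mu_m$; after substitution they become $-\mu_h$, $-(\gamma_1+\mu_h+\delta)$ and $-(\mu_m+\eta)$, all negative. The reduced cubic has coefficients $a_1,a_2,a_3$ with $\mu_m$ replaced by $\mu_m+\eta$ and $\mathcal{R}_0$ by $\mathcal{R}_{0\eta}$. Since $\mu_m+\eta>0$ and $\mathcal{R}_{0\eta}>1$, we get $a_1,a_2,a_3>0$. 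The inequality $a_1a_2>a_3$ also carries over, because its derivation uses only the $(\alpha+\gamma_0+\mu_h)$ part of $a_1$ and the last term of $a_2$, both unaffected in sign. The Routh--Hurwitz criterion then gives local asymptotic stability.

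\textbf{Main obstacle.} The step needing care is verifying that the earlier derivation of the cubic genuinely used no relation tying $\Lambda_m$ to $\mu_m$, such as $\Lambda_m=\mu_m N_m$, which fogging breaks. I would check this by confirming that the coefficients $a_i$ are written purely in terms of the independent parameters $\Lambda_m$ and $\mu_m$; that makes the substitution legitimate.

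A final remark concerns which system is being discussed. The equilibrium is an equilibrium of the full non-smooth system \eqref{model} only when $kC\le I^\dagger<C$. The theorem, however, concerns system \eqref{modelb} itself, so this admissibility condition is not needed for the proof.
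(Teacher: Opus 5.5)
Your reduction is the same route the paper takes. The paper gives no separate argument for this theorem beyond the remark that the equilibrium of \eqref{modelb} is obtained from $\mathcal{E}_1$ by replacing $\mu_m$ with $\mu_m+\eta$ (and $\mathcal{R}_0$ with $\mathcal{R}_{0\eta}$). Your Steps 1 and 2 are correct. Together with the preceding theorem, used as a black box valid for all positive parameter values, they already prove the statement. Your observations that $\Lambda_m$ is an independent parameter and that admissibility $kC\le I<C$ is not part of the claim are also correct.

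Be careful in Step 3, where you assert that the earlier verification of $a_1a_2>a_3$ carries over. That verification is itself wrong as printed.
\begin{itemize}
\item Its second inequality is reversed. Since $\mu_h\mu_m+\beta_h\Lambda_m>\beta_h\Lambda_m$, we get $(\mathcal{R}_0^2-1)\frac{\mu_h\mu_m\beta_h\Lambda_m}{\mu_h\mu_m+\beta_h\Lambda_m}<(\mathcal{R}_0^2-1)\mu_h\mu_m$. So the retained product is smaller than $a_3$, not larger.
\item The printed $a_2$ is not the true coefficient. As $\mathcal{R}_0\to1$, where $\mathcal{E}_1$ merges with $\mathcal{E}_0$, the cubic must become $\lambda(\lambda+\mu_h)(\lambda+\alpha+\gamma_0+\mu_h+\mu_m)$. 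Its linear coefficient $\mu_h(\alpha+\gamma_0+\mu_h+\mu_m)$ differs from the printed limit $\mu_h(\alpha+\gamma_0+\mu_h)$.
\end{itemize}

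The conclusion is nevertheless true, and a correct check is short. Write $k_1=\alpha+\gamma_0+\mu_h$, $a=\beta_hV^*+\mu_h=\Lambda_h/S^*$ and $e=\beta_{m1}A^*+\mu_m=\Lambda_m/U^*$. Split off $N_m=U+V$, which carries the eigenvalue $-\mu_m$, and use the equilibrium identity $p\beta_h\beta_{m1}S^*U^*=k_1\mu_m$. Then the $(S,A,V)$ block has characteristic polynomial $(\lambda+a)(\lambda+k_1)(\lambda+e)-k_1\mu_m(\lambda+\mu_h)$. Hence $a_1=a+k_1+e$, $a_2=k_1(a+\beta_{m1}A^*)+ae$ and $a_3=k_1(ae-\mu_h\mu_m)$. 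The same identity gives $ae=\mu_h\mu_m\mathcal{R}_0^2$. Therefore $a_3=k_1\mu_h\mu_m(\mathcal{R}_0^2-1)>0$ and $a_1a_2>k_1\,ae>a_3$.

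Apply your substitution $\mu_m\mapsto\mu_m+\eta$, $\mathcal{R}_0\mapsto\mathcal{R}_{0\eta}$ to this computation rather than to the printed one, and Step 3 becomes sound.
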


\subsection{Case when $I(t)\geq C$}
For $I(t) \geq C$, where symptomatic infections surpass the capacity threshold, the fogging intervention is operative, i.e.~$h(I) = \eta$ and the hospital capacity is exceeded. Accordingly, the recovery function is defined as $f(I) = \gamma_1 C + \gamma_2 (I - C)$, and the mosquito transmission dynamics are governed by $g(U, A, I) = \beta_{m1} UA + \beta_{m2} U(I - C)$. Under these conditions, system \eqref{model} reads as

\begin{eqnarray}
\dot{S} &=& \Lambda_h - \beta_h S V - \mu_h S, \nonumber\\
\dot{A} &=& p \beta_h S V - \left(\alpha + \gamma_0 + \mu_h\right)A, \nonumber\\
\dot{I} &=& (1-p)\beta_h S V + \alpha A - \gamma_1 C-\gamma_2(I-C) - (\mu_h + \delta)I, \label{modelc} \\
\dot{R} &=& \gamma_0 A + \gamma_1 C+\gamma_2(I-C) - \mu_h R, \nonumber \\
\dot{U} &=& \Lambda_m - \beta_{m1}UA-\beta_{m2}U(I-C) - (\mu_m+\eta)\,U, \nonumber\\
\dot{V} &=& \beta_{m1}UA+\beta_{m2}U(I-C) - (\mu_m+\eta)\,V. \nonumber
\end{eqnarray}

Solving for the endemic equilibrium $\mathcal{E}^\ddagger=(S^\ddagger,A^\ddagger,I^\ddagger,R^\ddagger,U^\ddagger,V^\ddagger)$, we obtain
\begin{subequations}\label{end3}
\begin{align}
    A^\ddagger &= \frac{p\beta_h}{\alpha+\gamma_0+\mu_h} S^\ddagger V^\ddagger \\
    (I-C)^\ddagger &= \frac{\beta_h}{\gamma_2+\delta+\mu_h} \left[ 1-p + \frac{\alpha p}{\alpha+\gamma_0+\mu_h}\right] S^\ddagger V^\ddagger - \frac{\gamma_1+\delta+\mu_h}{\gamma_2+\delta+\mu_h} C \\
    R^\ddagger &= \lambda_R S^\ddagger V^\ddagger + \frac{\gamma_1-\gamma_2}{\mu_h} \frac{\delta+\mu_h}{\gamma_2+\delta+\mu_h} C \\
    U^\ddagger &= \frac{\Lambda_m}{B_0 S^\ddagger V^\ddagger + B_1} \stackrel{!}{=} \frac{\Lambda_m}{\mu_m+\eta} - V^\ddagger \label{end3d}\\
    S^\ddagger V^\ddagger &= \frac{\Lambda_h V^\ddagger}{\beta_h V^\ddagger+\mu_h}
\intertext{where}
    B_0 &= \frac{p \beta_h \beta_{m1}}{\alpha+\gamma_0+\mu_h} + \frac{\beta_h \beta_{m2}}{\gamma_2+\delta+\mu_h} \left[ 1-p + \frac{\alpha p}{\alpha+\gamma_0+\mu_h} \right] \nonumber \\
    B_1 &= -K C + \mu_m + \eta \nonumber \\
    \lambda_R &= \frac{\beta_h}{\mu_h} \left( \frac{\gamma_0}{\alpha+\gamma_0+\delta} p + \frac{\gamma_2}{\gamma_2+\delta+\mu_h} \left[ 1-p+ \frac{\alpha p}{\alpha+\gamma_0+\mu_h} \right] \right)\;. \nonumber
\end{align}
\end{subequations}
with $K = \beta_{m2}\dfrac{\gamma_1+\delta+\mu_h}{\gamma_2+\delta+\mu_h}$. Note, that $B_0, \lambda_R>0$ but $B_1$ gets negative for $C$ large.

The two equations for $U^\ddagger$ render the following quadratic equation
\begin{equation}\label{Fx}
    \mathcal{F}(x) = a_2 x^2 + a_1 x + a_0 = 0
\end{equation}
where $a_0 =\Lambda_m\mu_h \left( 1- \frac{B_1}{\mu_m+\eta}\right) >0$, $a_1 = \Lambda_m\beta_h + \mu_h B_1 - \frac{\Lambda_m}{\mu_m +\eta}a_2$ and $a_2= B_0\Lambda_h + \beta_h B_1$. We observe, that $a_2<0$ is equivalent to $C>C_2$, where
\[
C_2:= \dfrac{B_0\Lambda_h+\beta_h(\mu_m+\eta)}
{\beta_h K}.
\]
The solutions of the quadratic equation~\eqref{Fx} correspond to possible endemic equilibria $V^\ddagger$. Feasible solutions have to satisfy $U^\ddagger >0$ or equivalently $V^\ddagger < \dfrac{\Lambda_m}{\mu_m+\eta}$. Now, we distinguish the following cases:
\begin{enumerate}
\item If $a_2<0$, then  $\mathcal{F}(0)=a_0>0$ and $\mathcal{F}\left(\frac{\Lambda_m}{\mu_m+\eta}\right) >0$, so there exists no endemic equilibrium $0\le V^\ddagger<\dfrac{\Lambda_m}{\mu_m+\eta}$.

\item If $a_2>0$ and $a_1>-2\sqrt{a_0a_2}$ corresponds to the discriminant of $\mathcal{F}(x)$ being negative. Hence, the equation $\mathcal{F}(x)=0$ has no real solution and, consequently, no endemic equilibrium exists.

\item If $a_2>0$ and $a_1<-2\sqrt{a_0a_2}$, then the discriminant is positive. For the corresponding endemic equilibria to be biologically feasible, all components given in~\eqref{end3} must be positive. In particular, we have to ensure that $V^\ddagger<\dfrac{\Lambda_m}{\mu_m+\eta}$. Hence, we require
\[
\mathcal{F}'\left(\dfrac{\Lambda_m}{\mu_m+\eta}\right)>0.
\]
A direct calculation shows,  that this condition holds if and only if $C<C_3$, where
\[
C_3
=
\frac{1}{K}
\frac{
B_0\Lambda_h\Lambda_m
+\mu_h(\mu_m+\eta)^2
+2\beta_h\Lambda_m(\mu_m+\eta)
}{
\beta_h\Lambda_m+\mu_h(\mu_m+\eta)
}.
\]
In addition, the positivity condition for the infected population requires $I^\ddagger>C$. This condition is satisfied if $C<C_4$, where
\[
C_4
:=
\frac{\beta_h}{\gamma_2+\delta+\mu_h}
\left[
1-p+\frac{\alpha p}{\alpha+\gamma_0+\mu_h}
\right]
\frac{\Lambda_hV^\ddagger}{\beta_hV^\ddagger+\mu_h}
\frac{\beta_{m2}}{K-\beta_{m2}}.
\]
The condition $a_1>2\sqrt{a_0 a_2}$ leads to a second order polynomial $g(C)=g_2 C^2+g_1 C+g_0$ with respect to $C$. Although the coefficients are lengthy expressions, one can see, that $g_2>0$ and $g_1<0$; the sign of $g_0$ depends in a complicated and non--obvious way on all parameters of the problem. Nevertheless, the condition $a_1>2\sqrt{a_0 a_2}$ is satisfied, if $C<C_1^{(1)}$ or $C>C_1^{(2)}$ for some $C_1^1 < C_1^2$. In this situation, we get two possible endemic equilibria form equation ~\eqref{Fx}.

Consequently, two biologically feasible endemic equilibria exist provided that
\[
C<C_2,\qquad
C<C_3,\qquad
C<C_4,
\]
and
\[
C<C_1^{(1)}
\qquad\text{or}\qquad
C>C_1^{(2)}.
\]
\end{enumerate}

Based on the numerical values given in Table~\ref{tab:placeholder}, we have $C_1^{(1)}=3103, C_1^{(2)}=5629, C_2=1.7\times 10^6, C_3=3.4\times10^6,$ and $C_4=25351$, then we have $\min\{C_1,C_2,C_3,C_4\} = 3103.$ Since our parameter values in Table~\ref{tab:placeholder} with $C=1500$ are satisfying the second case in our endemic existence condition, so there are two endemic equilibria $ E^\ddagger_1 = \left( S_1^\ddagger, A_1^\ddagger, I_1^\ddagger-C, U_1^\ddagger, V_1^\ddagger\right)
    = \left(386736, 752, 2991, 18566007, 28348 \right)$ and $E^\ddagger_2  = \left(7160289, 283, 7, 18593779, 576 \right)$.
On the other hand, if we choose $C>\min\{C_1,C_2,C_3,C_4\},$ for an example $C=3104$, then we will have no positive roots of $\mathcal{F}(x),$ and hence no endemic equilibrium in the regime-3, where $I(t)>C$. However, we may still have the endemic equilibrium whether it is in the first regime where $I(t)< kC$ or in the second regime where $kC\leq I(t)<C$.

\section{Numerical investigation of dengue transmission model}\label{Sec4}

In this section we will present a detailed numerical study of the dynamical response of the dengue transmission model proposed in this work, in the context of piecewise-smooth dynamical systems \cite{librochamp}. To this end, we will employ numerical continuation (path-following) techniques, which provide a structured framework for examining how a model responds to variations in its parameters, with particular emphasis on identifying qualitative transitions in behavior, such as bifurcations. For investigating periodic solutions in piecewise-smooth systems using continuation techniques, several dedicated computational tools are available, including SlideCont \cite{slidecont}, TC-HAT \cite{tchat} and COCO \cite{dankowicz2013}. In this study, COCO is selected as the primary tool for the numerical analysis of the dengue transmission model. The following section presents a detailed description of the mathematical formulation required to conduct this investigation using COCO.

\subsection{The dengue transmission model as a piecewise-smooth dynamical system}

In order to investigate the dengue transmission model \eqref{model} by means of numerical continuation in COCO, the governing equations must be cast into the framework of piecewise-smooth dynamical systems, see below
\begin{equation}
	\dot{z}(t)=\left(\setstretch{1.25}\begin{array}{c}
		\Lambda_h - \beta_h S(t) V(t) - \mu_h S(t)\\
		p \beta_h S(t) V(t) - \left(\alpha + \gamma_0 + \mu_h\right)A(t)\\
		(1-p)\beta_h S(t) V(t) + \alpha A(t) - F_{\text{\tiny
				DEN}}(t) - (\mu_h + \delta)I(t)\\		
		\Lambda_m - G_{\text{\tiny
				DEN}}(t) - (\mu_m + H_{\text{\tiny
				DEN}}(t))U(t)\\
		G_{\text{\tiny
				DEN}}(t) - (\mu_m + H_{\text{\tiny
				DEN}}(t))V(t)			
	\end{array}\right)=:f_{\text{\tiny
			DEN}}\left(z(t),\rho,H_{\mathrm{C}},H_{\mathrm{kC}}\right),\label{eq-hybsys}
\end{equation}
where the vector
$z=(S,A,I,U,V)^T\in\re^5_{\geq0}$ denotes the state variables of the system and $\rho\in\re^{16}_{>0}$ the parameter vector specified in Table \ref{tab:placeholder}. Notice that in system \eqref{eq-hybsys} the $R$-compartment has been omitted since it has no dynamical influence on the remaining compartments, it only serves to monitor the evolution of recovered individuals. The functions $F_{\text{\tiny
DEN}}$, $G_{\text{\tiny DEN}}$ and $H_{\text{\tiny DEN}}$ appearing in \eqref{eq-hybsys} are defined as (see \eqref{recovery}--\eqref{fog})
\begin{align}
	F_{\text{\tiny DEN}}(t)&=\gamma_1 I(t)(1-H_{\mathrm{C}})+\left(\gamma_1 C + \gamma_2 (I(t) - C)\right)H_{\mathrm{C}},\nonumber\\
	G_{\text{\tiny DEN}}(t)&=\beta_{m1} U(t) A(t)+\beta_{m2} U(t) (I(t) - C)H_{\mathrm{C}},\label{eq-fum}\\
	H_{\text{\tiny DEN}}(t)&=\eta H_{\mathrm{kC}},\nonumber
\end{align}
$t\geq0$. Here, $H_{\mathrm{C}}$ and $H_{\mathrm{kC}}$ are discrete switching variables that encode the distinct operation regimes of the system, and are given by
\begin{align}
	H_{\mathrm{C}}&=\begin{cases}
		1, & I>C,\mbox{ }\mbox{ }\mbox{(dengue infections above hospital capacity)},\\
		0, & I\leq C,\mbox{ }\mbox{ }\mbox{(dengue infections below hospital capacity)},
	\end{cases}\label{eq-C}\\
	H_{\mathrm{kC}}&=\begin{cases}
		1, & I\geq kC,\mbox{ }\mbox{ }\mbox{(fogging applied)},\\
		0, & I<kC,\mbox{ }\mbox{ }\mbox{(no fogging)}.
	\end{cases}\label{eq-kC}
\end{align}
The switching variables defined above encode the distinct operation regimes of the system, as listed in Table \ref{tab-modes}. Here, the fogging takes place whenever $H_{\mathrm{kC}}=1$, and no fogging otherwise. Also, it is important to note the not biologically feasible scenario where $H_{\mathrm{kC}}=0$ and $H_{\mathrm{C}}=1$, which should be discarded for the numerical implementation.

\begin{table}[htpb!]
	\caption{Operation modes of the dengue transmission model and the corresponding
		values of the discrete variables $H_{\mathrm{C}}$ and $H_{\mathrm{kC}}$
		defined in Eqs.\ \eqref{eq-C}, \eqref{eq-kC}.} \label{tab-modes} \vspace{-10pt}
	\begin{center}
		\small \setstretch{1.2}\begin{tabular}{|c|c|c|}\hline
			\textbf{Operation mode} & $H_{\mathrm{kC}}$ & $H_{\mathrm{C}}$\\
			\hline
			Dengue infections $I$ below $kC$ & 0 & 0\\
			\hline	Not biologically feasible & 0 & 1\\
			\hline	Dengue infections $I$ between $kC$ and $C$ & 1 & 0\\
			\hline	Dengue infections $I$ above $C$ & 1 & 1\\
			\hline
		\end{tabular}
	\end{center}
\end{table}

\subsection{Solution measures}\label{sec-measures}

In the present numerical investigation, we will discover the possibility of oscillating solutions for the dengue transmission model \eqref{eq-hybsys}. For such periodic responses, to assess performance and identify favorable operating regimes, suitable quantitative indicators will be introduced. Suppose that $(S(t),A(t),I(t),U(t),V(t))$, $t\geq0$, is a bounded $T_{0}$-periodic solution of \eqref{eq-hybsys}. Under this assumption, we define
\begin{equation}
	\label{eq-M_AVcost}
	\mbox{Cost}=\frac{1}{T_{0}}\int_{0}^{T_{0}}H_{\text{\tiny DEN}}(s)\,ds,
\end{equation}
which gives the average cost of the fogging policy within one period. Similarly, we introduce
\begin{equation}
	\label{eq-MDARK}
	A_{\text{\tiny Frac,\%}}=\frac{\int_{0}^{T_{0}}A(s)\,ds}{\int_{0}^{T_{0}}I(s)\,ds+\int_{0}^{T_{0}}A(s)\,ds}\times100.
\end{equation}
This indicator represents the percentage of asymptomatic individuals infected with dengue in relation to the whole infected population. And finally, we consider the quantity
\begin{equation}
	\label{eq-MIpeak}
	I_{\text{\tiny Peak}}=\max_{0\leq t\leq
	T_{0}}I(t),
\end{equation}
which gives the peak value of the infected compartment within one period. The quantities defined above provide meaningful information about the dynamical behavior of the dengue transmission model \eqref{eq-hybsys} and the efficacy of the fogging policy. They will be used later to identify optimal operating conditions under parameter variations.

\subsection{Numerical investigation of equilibria}

To begin our numerical study, let us consider the transient response of the dengue transmission model \eqref{eq-hybsys}, computed for the parameter values given in Table \ref{tab:placeholder}, see Fig.\ \ref{fig-sol-ini-end}. In this case, the system operates in the regime $kC\leq I(t)\leq C$, with $H_{\mathrm{kC}}=1$ and $H_{\mathrm{C}}=0$, meaning that the fogging policy is activated and the dengue infections are within the hospital capacity. As the transient decays, the system settles down to an endemic equilibrium, which will be used as starting point for the numerical continuation analysis.

\begin{figure}[H]
	\centering\psfrag{#}{\scriptsize\hspace{-1pt}$\times$}\psfrag{t}{\large$t$ \scriptsize[days]}\psfrag{I}{$I(t)$}\psfrag{S}{$S(t)$}\psfrag{A}{$A(t)$}\psfrag{V}{$V(t)$}\psfrag{kC}{\small $kC$}\psfrag{C}{\small $C$}
	\includegraphics[width=\textwidth]{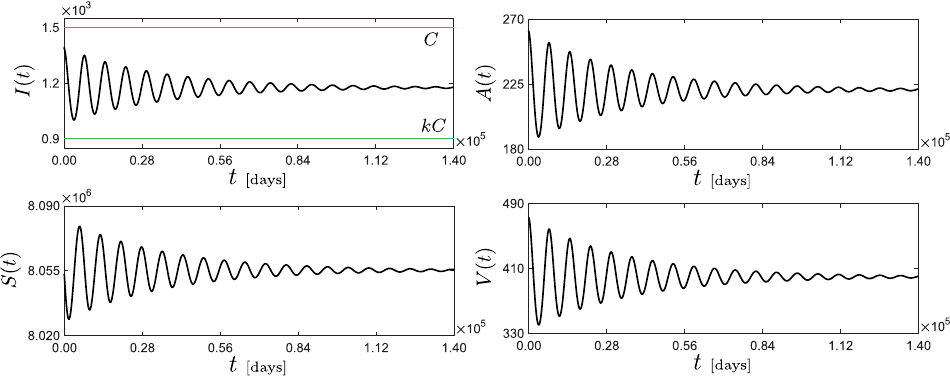}
	\caption{Transient response of the dengue transmission model \eqref{eq-hybsys}, computed for the parameter values given in Table \ref{tab:placeholder}. In this picture, the horizontal green lines denote the critical thresholds $I=kC$ and $I=C$.}\label{fig-sol-ini-end}
\end{figure}

The numerical continuation investigation will consider first the mosquito-to-human transmission rate $\beta_{h}$ as main bifurcation parameter, which gives a measure related to human exposure to the dengue carriers. The result is presented in Fig.\ \ref{fig-bif-diags-eq}(a). Here we see the continuation of the starting equilibrium (marked P1) found in Fig.\ \ref{fig-sol-ini-end} with respect to $\beta_{h}$. As $\beta_{h}$ decreases, the $I$ component of the equilibrium decreases as well, until it reaches the critical value $I=kC$, for $\beta_{h}\approx3.8417\times10^{-8}$. Here, a boundary-equilibrium bifurcation BE2 takes place \cite{librochamp}. At this value, there is a discontinuity in the vector field, produced by the fogging function $H_{\text{\tiny DEN}}$ defined in \eqref{eq-fum} and \eqref{eq-kC}. The equilibrium branch for $I<kC$ starts at the boundary-equilibrium bifurcation BE1, found for $\beta_{h}\approx2.6208\times10^{-8}$. As $\beta_{h}$ reduces, so does the infected compartment $I$, until the critical value BP ($\beta_{h}\approx2.0537\times10^{-8}$) is found, where a forward bifurcation takes place. A this point, a branch of endemic and disease-free equilibria meet, with the corresponding stability change for the latter.

Now let us investigate the fate of the endemic equilibrium P1 for increasing values of $\beta_{h}$. As can be expected, the infected compartment $I$ increases with $\beta_{h}$, until a nonsmooth fold bifurcation (NSF1, $\beta_{h}\approx4.7047\times10^{-8}$) is detected \cite{librochamp}. This is the moment where the infections have reached the hospital capacity $I=C$, which marks a critical level for the health system. At this point, a branch of unstable endemic equilibria is born, with a nonsmooth intersection with the stable branch. For $\beta_{h}$ values beyond NSF1, oscillatory behavior with high infections levels is detected, produced by the Hopf bifurcation H ($\beta_{h}\approx7.6763\times10^{-7}$) visible in Fig.\ \ref{fig-bif-diags-eq}(b). Also, we notice the presence of a smooth fold point F ($\beta_{h}\approx5.5212\times10^{-9}$), which has no significant dynamical consequences for the system behavior, as it simply adds another real eigenvalue to the right-half plane.

\begin{figure}[H]
	\centering
	\psfrag{#}{\scriptsize\hspace{-1pt}$\times$}\psfrag{a}{\large(a)}\psfrag{b}{\large(b)}\psfrag{p}{\large(c)}\psfrag{d}{\large(d)}\psfrag{Bh}{\large$\beta_{h}$}\psfrag{et}{\large$\eta$}\psfrag{kC}{\small $kC$}\psfrag{C}{\small $C$}\psfrag{MI}{\large$\left.I\right|_{\footnotesize\mbox{eq}}$}\psfrag{R1}{$\left.I\right|_{\footnotesize\mbox{eq}}\leq kC$}\psfrag{R2}{\small$kC<\left.I\right|_{\footnotesize\mbox{eq}}\leq C$}\psfrag{R3}{$I_{\text{\tiny Peak}}>C$}\psfrag{L1}{\large$\ell_{1}$}
	\psfrag{L2}{\large$\ell_{2}$}
	\includegraphics[width=\textwidth]{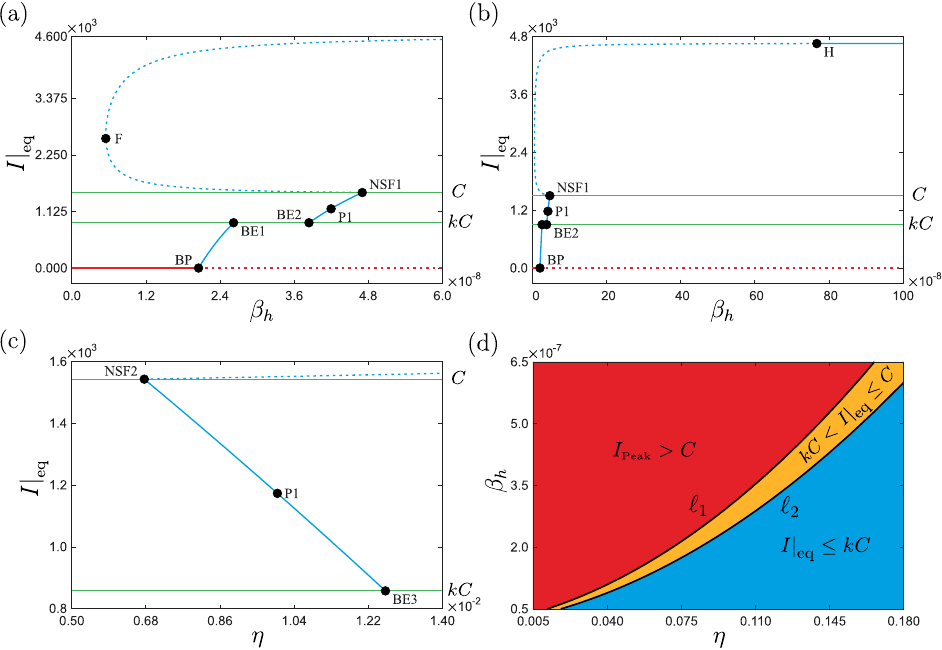}
	\caption{(a) One-parameter continuation of the endemic equilibrium encountered in Fig.\ \ref{fig-sol-ini-end} (marked P1 in the diagram) with respect to the mosquito-to-human transmission rate $\beta_{h}$. (b) Same bifurcation diagram as in panel (a), showing a larger parameter range. (c) Continuation of the endemic equilibrium with respect to the fogging rate $\eta$. (d) Two-parameter continuation of the bifurcation points NSF2 (curve $\ell_{1}$) and BE3 (curve $\ell_{2}$) with respect to $\eta$ and $\beta_{h}$, dividing the parameter space into three regions, as indicated in the diagram. Here, $I_{\text{\tiny Peak}}$ refers to the peak $I$-value of a periodic solution (see Fig.\ \ref{fig-sol-ini-per}). In panels (a)--(c), branches of stable solutions are plotted with solid lines, while dashed lines denote instability. The detected bifurcation points are located as follows: branching point (BP, $\beta_{h}\approx2.0537\times10^{-8}$), boundary equilibrium (BE1, $\beta_{h}\approx2.6208\times10^{-8}$; BE2, $\beta_{h}\approx3.8417\times10^{-8}$; BE3, $\eta\approx1.2625\times10^{-2}$), nonsmooth fold (NSF1, $\beta_{h}\approx4.7047\times10^{-8}$; NSF2, $\eta\approx6.7663\times10^{-3}$), fold (F, $\beta_{h}\approx5.5212\times10^{-9}$) and Hopf (H, $\beta_{h}\approx7.6763\times10^{-7}$).}\label{fig-bif-diags-eq}
\end{figure}

Next, we will consider another parameter for the continuation study, namely, the fogging rate $\eta$. Our main interest here is to try to understand its effect on the system and verify the effectiveness of the proposed disease control strategy. The result is displayed in Fig.\ \ref{fig-bif-diags-eq}(c), which shows the numerical continuation of the starting equilibrium labeled P1 with respect to $\eta$. The observed decreasing behavior of the $I$ compartment confirms the effectiveness of the fogging action. As $\eta$ varies, again two nonsmooth bifurcations of equilibria are found, detected at $\eta\approx6.7663\times10^{-3}$ (nonsmooth fold NSF2) and $\eta\approx1.2625\times10^{-2}$ (boundary equilibrium bifurcation BE3). These points  mark again critical biological limit cases, namely, the activation of the fogging policy (BE3, when $I=kC$) and the reaching of the hospital capacity (NSF2, when $I=C$). Let us now investigate how these critical points are affected when two parameter are allowed to vary simultaneously. Specifically, we will carry out the two-parameter continuation of the critical points BE3 and NSF2 with respect to the fogging rate $\eta$ and mosquito-to-human transmission rate $\beta_{h}$. The result is presented in Fig.\ \ref{fig-bif-diags-eq}(d). The resulting curves divide the parameter space into three biologically relevant regions: $I\leq kC$ (endemic equilibria with no fogging), $kC<I\leq C$ (endemic equilibria with fogging) and $I_{\text{\tiny Peak}}>C$ (oscillatory behavior with high infections levels, see next section). The increasing pattern of the boundary curves $\ell_{1}$ and $\ell_{2}$ reveal the fact that higher mosquito-to-human transmission rates $\beta_{h}$ demand higher fogging rates $\eta$, in order to bring the system to a biologically acceptable scenario, from a health policy perspective.

\begin{figure}[H]
	\centering\psfrag{#}{\scriptsize\hspace{-1pt}$\times$}\psfrag{a}{\large(a)}\psfrag{b}{\large(b)}\psfrag{p}{\large(c)}\psfrag{d}{\large(d)}\psfrag{t}{\large$t$ \scriptsize[days]}\psfrag{I}{$I(t)$}\psfrag{h}{$H_{\text{\tiny DEN}}(t)$}\psfrag{kC}{\small $kC$}\psfrag{C}{\small $C$}\psfrag{et}{\small $\eta$}
	\includegraphics[width=\textwidth]{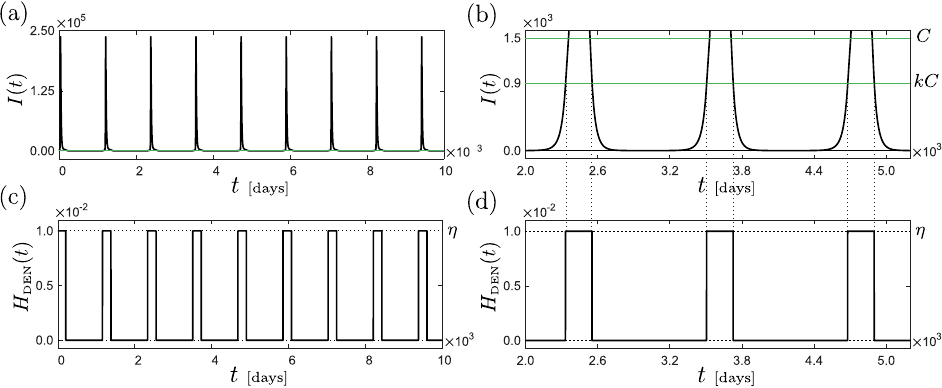}
	\caption{Periodic response of the dengue transmission model \eqref{eq-hybsys}, computed for the parameter values given in Table \ref{tab:placeholder}, with $\beta_{h}=9.3\times10^{-7}$. Panels (b) and (d) display the same solutions as those in (a) and (c), for a narrower time window.}\label{fig-sol-ini-per}
\end{figure}

\subsection{Numerical investigation of oscillatory responses}

As already discussed in the previous section, the dengue transmission model \eqref{eq-hybsys} is able to show oscillatory behavior, produced by the Hopf bifurcation detected in Fig.\ \ref{fig-bif-diags-eq}(b). A reference periodic solution is presented in Fig.\ \ref{fig-sol-ini-per}, computed for the parameter values given in Table \ref{tab:placeholder}, with $\beta_{h}=9.3\times10^{-7}$. In this figure, we can observe the behavior of the fogging signal $H_{\text{\tiny DEN}}(t)$, which presents a rectangular-wave pattern, due to the repetitive crossing of the infected compartment with the critical boundary $I=kC$, hence activating and deactivating the fogging action. This can be closer seen at the blow-up shown in panel (b) and (d) of Fig.\ \ref{fig-sol-ini-per}.

\begin{figure}[H]
	\centering
	\psfrag{#}{\scriptsize\hspace{-1pt}$\times$}\psfrag{a}{\large(a)}\psfrag{b}{\large(b)}\psfrag{c}{\large(c)}\psfrag{d}{\large(d)}\psfrag{e}{\large(e)}\psfrag{f}{\large(f)}\psfrag{k}{\large$k$}\psfrag{et}{\large$\eta$}\psfrag{MIpeak}{\large$I_{\text{\tiny Peak}}$}
	\psfrag{MDARK}{\large$A_{\text{\tiny Frac,\%}}$}\psfrag{Cost}{\large Cost}
	\includegraphics[width=\textwidth]{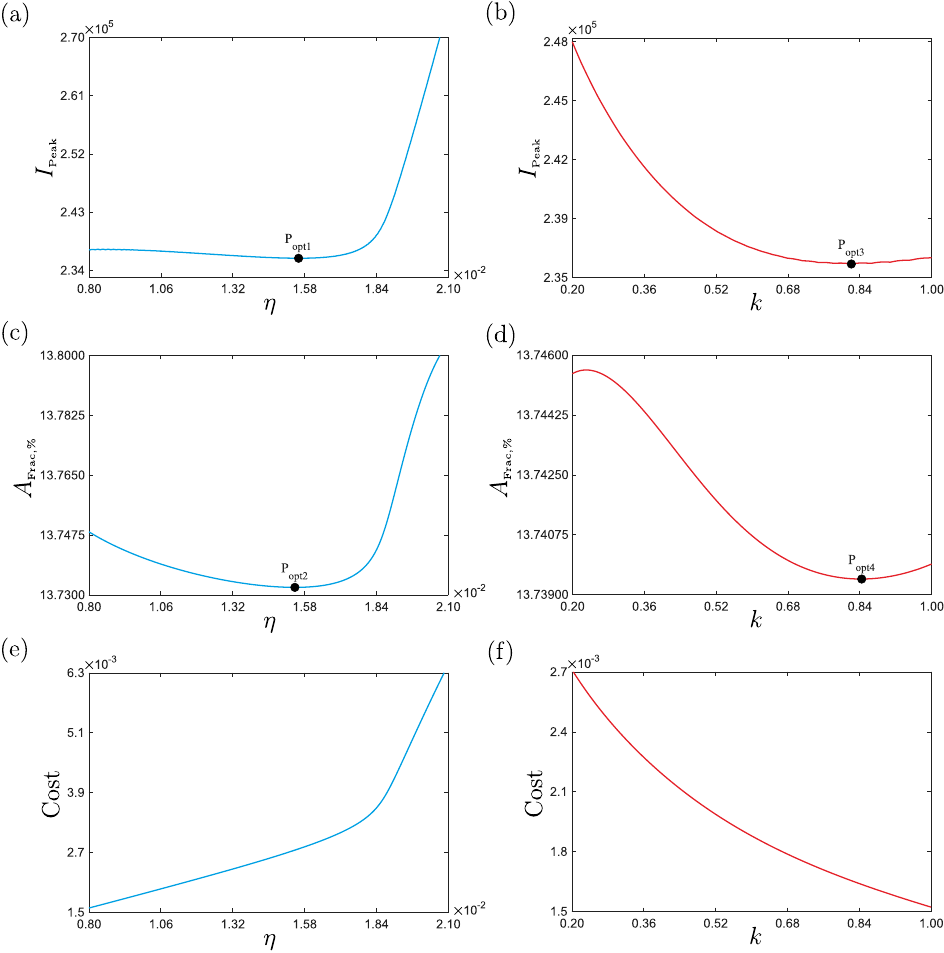}
	\caption{One-parameter continuation of the periodic response displayed in Fig.\ \ref{fig-sol-ini-per}, with respect to the asymptomatic fogging rate $\eta$ (panels (a), (c) and (e), in blue) and fogging proportion threshold $k$ (panels (b), (d) and (f), in red).  In these diagrams, the vertical axes show the behavior of the peak $I$-value $I_{\text{\tiny Peak}}$, fraction of asymptomatic infections $A_{\text{\tiny Frac,\%}}$ and fogging cost (see Section \ref{sec-measures}). Local minima of the monitor functions are located at $\eta\approx1.5583\times10^{-2}$ (P$_{\scriptsize\mbox{opt1}}$), $\eta\approx1.5454\times10^{-2}$ (P$_{\scriptsize\mbox{opt2}}$), $k\approx0.8215$ (P$_{\scriptsize\mbox{opt3}}$) and $k\approx0.8445$ (P$_{\scriptsize\mbox{opt4}}$).}\label{fig-bif-diags-per}
\end{figure}

Let us now investigate how the main fogging policy parameters $\eta$ and $k$ affect the behavior of the reference oscillatory response discussed above. Specifically, we will carry out the numerical continuation of the piecewise-smooth periodic solution shown in Fig.\ \ref{fig-sol-ini-per}, with respect to the fogging rate $\eta$ and fogging proportion threshold $k$, one parameter at the time. The result is plotted in Fig.\ \ref{fig-bif-diags-per}. These diagrams present the behavior of the solutions measures defined in Section \ref{sec-measures}, namely, peak $I$-value $I_{\text{\tiny Peak}}$, fraction of asymptomatic infections $A_{\text{\tiny Frac,\%}}$ and fogging cost. Interestingly, there is no monotonic relation between $I_{\text{\tiny Peak}}$, $A_{\text{\tiny Frac,\%}}$ with respect to the selected bifurcation parameters. In fact, one can identify optimal values of both $\eta$ and $k$ for which the aforementioned solution measures are locally minimized. The optimal values are as follows: $\eta\approx1.5583\times10^{-2}$ (P$_{\scriptsize\mbox{opt1}}$, panel (a)), $\eta\approx1.5454\times10^{-2}$ (P$_{\scriptsize\mbox{opt2}}$, panel (c)), $k\approx0.8215$ (P$_{\scriptsize\mbox{opt3}}$, panel (b)) and $k\approx0.8445$ (P$_{\scriptsize\mbox{opt4}}$, panel (d)). Noticeably, the optimal values minimizing  $I_{\text{\tiny Peak}}$ and $A_{\text{\tiny Frac,\%}}$ are different but very close to one another, indicating that there is a connection between the observed peak values of the infected compartment and the fraction of asymptomatic infections, showing the importance of considering this latter compartment for health monitoring strategies. On the other hand, panels (e) and (f) of Fig.\ \ref{fig-bif-diags-per} presents the behavior of the average fogging costs within one period of the oscillatory solution. As can be expected, higher applied fogging rates increase the cost, while low values of fogging proportion threshold $k$ augment such cost, since low $k$ means the fogging action is applied earlier. In the limit, for $k\to0$, we have that fogging is applied all the time, which is of course a nonviable scenario from an economic point of view. Besides, the optimal points P$_{\scriptsize\mbox{opt3}}$ and P$_{\scriptsize\mbox{opt4}}$ indicate that the resources have to be applied in a suitable manner, in order to achieve health policy goals in an efficient way.

\section{Discussion and Conclusions}\label{Sec5}
\subsection{Discussion}
Dengue remains an annual public health problem in many tropical countries, with recurring outbreaks indicating that current intervention strategies are not yet fully effective \cite{guzman2010dengue}. Two of the most widely used interventions are fumigation (fogging) and hospitalization of infected individuals \cite{oki2011optimal}. Many existing mathematical models incorporate these interventions \cite{aldila2023impact,rodrigues2010insecticide,rodrigues2013bioeconomic}, but typically without accounting for practical resource constraints, such as limited hospital bed capacity and reactive, threshold-triggered fogging schedules. In practice, however, hospital capacity is limited, and fogging is generally not applied continuously but rather triggered reactively once the number of reported infections reaches a certain threshold tied to the epidemic situation. This discrepancy between idealized model assumptions and real-world resource constraints motivates the need for a model that explicitly accounts for both limited hospital capacity and threshold-triggered fogging. To address this, we develop a non-smooth ordinary differential equation model of dengue transmission in which a hospital capacity $C$ and a fogging-triggering threshold $kC$, $k \in [0,1]$, jointly govern the dynamical system. The model distinguishes three regimes: when infections remain below the trigger threshold ($I < kC$), no fogging is implemented and all infected individuals can be hospitalized; once infections cross the threshold but remain within capacity ($kC \leq  I < C$), fogging is activated while hospitalization continues to isolate all cases; and when infections exceed hospital capacity ($I \geq  C$), fogging remains active, but the excess $I - C$ individuals cannot be admitted to the hospital and instead remain in the population, where they continue to transmit the disease and further endanger public health.

To understand the qualitative behavior of the system within each operating condition, we first analyze the model analytically in each of the three regimes separately, since the governing equations---and hence the relevant equilibria---differ across regimes due to the discontinuous nature of the fogging and hospitalization mechanisms. This regime-wise analysis provides the foundation for identifying how the disease dynamics change as the epidemic crosses the intervention thresholds $I=kC$ and $I=C$. In the first regime ($I<kC$), we establish the existence and local stability of both the disease-free and endemic equilibria. Consistent with standard epidemic model behavior, the disease-free equilibrium is stable when the basic reproduction number $\mathcal{R}_0<1$ and unstable when $\mathcal{R}_0>1$, while the endemic equilibrium exists only when $\mathcal{R}_0>1$ and is stable whenever it exists, indicating the occurrence of a transcritical bifurcation at $\mathcal{R}_0=1$. In the second regime ($kC<I<C$), the same qualitative structure is obtained; however, the activation of fogging, governed by the fogging rate $\eta$, reduces the reproduction number relative to the first regime, reflecting the impact of the intervention on disease transmission. The third regime ($I>C$) exhibits more complex behavior: since $I>C$ by definition, the disease-free equilibrium and its associated reproduction number are no longer epidemiologically relevant in this regime. Instead, we focus on the existence of endemic equilibria and find that two endemic equilibria emerge simultaneously through a saddle-node bifurcation. The stability of these equilibria, along with the switching behavior at the regime boundaries $I=kC$ and $I=C$, is investigated numerically in the following section.

 Numerical continuation confirms the analytical findings obtained in each regime and further reveals the rich dynamical structure induced by the switching thresholds $I=kC$ and $I=C$. Boundary-equilibrium bifurcations are detected precisely at these two critical values, marking the points at which the dynamical system transitions between the no-fogging, fogging, and hospital-overflow regimes; their presence confirms that the switching thresholds are not merely technical artifacts of the piecewise formulation but act as genuine organizing centers of the dynamical system, with direct epidemiological meaning as early-warning and capacity-saturation indicators. Beyond these boundary bifurcations, continuation with respect to the transmission rate reveals a Hopf bifurcation once infections exceed hospital capacity, giving rise to sustained periodic solutions with recurrent high-infection peaks. Such oscillatory behavior implies that outbreaks may recur periodically rather than settle into a single endemic level, considerably complicating control efforts, as interventions must then be sustained or timed appropriately to suppress recurring waves rather than a single peak. Moreover, the continuation results reveal a fold point $F$, close to the transcritical threshold, at which two additional endemic equilibria emerge while $\mathcal{R}_0<1$; although these equilibria are unstable and therefore do not compromise the guarantee of disease elimination once $\mathcal{R}_0<1$, their presence indicates that the equilibrium landscape near the epidemic threshold is more intricate than a standard transcritical exchange, underscoring the importance of a careful stability analysis near this threshold to ensure that public health guidance based solely on $\mathcal{R}_0$ correctly reflects the underlying dynamics.

The non-monotonic dependence of $I_{Peak}$ and $A_{Frac,\%}$ on both the threshold proportion $k$ and the fogging rate $\eta$ can be explained by the interaction between the timing of the fogging response and the population dynamics of the mosquito vector. For the threshold parameter $k$, a smaller value triggers fogging earlier and keeps it active over a substantially longer fraction of the epidemic cycle, since $I$ takes longer to fall back below a lower threshold $kC$. Because the human and mosquito compartments in model~\eqref{model} respond to fogging with an inherent structural delay---through the incubation and infectious periods on the human side, and the recruitment and infection processes on the mosquito side---an early and near-continuous fogging response does not necessarily synchronize with the natural period of the underlying epidemic cycle. Because the intervention is triggered by the epidemic state rather than by a fixed schedule, a poorly chosen threshold level can fail to align with the epidemic's intrinsic growth and decay pattern, so that the suppression of transmission is delayed rather than eliminated. This allows susceptible individuals to accumulate during the intervention phase and produces a larger rebound once the threshold condition is no longer met. This behavior is consistent with previous modeling work showing that the timing, rather than merely the intensity, of insecticide fogging is a critical determinant of its effectiveness in reducing dengue incidence \cite{oki2011optimal}. A similar but mechanistically distinct explanation applies to the fogging rate $\eta$: once $I$ exceeds the activation threshold $kC$, a larger $\eta$ suppresses the mosquito population more strongly for as long as fogging remains active, but this suppression ends abruptly once $I$ falls below $kC$, at which point the mosquito population recovers due to the continuous larval recruitment term $\Lambda_m$, which is unaffected by $h(I)$. This is consistent with field evidence showing that fogging predominantly targets adult mosquitoes while having limited effect on the larval and pupal stages, so that adult mosquito populations frequently rebound within days after fogging ceases \cite{mani2005efficacy}, and that routine chemical fogging is largely ineffective at breaking the reproductive cycle of gravid female \textit{Aedes} mosquitoes, leaving the immature population essentially unaffected \cite{chua2005effect}. Consequently, larger values of $\eta$ can produce a stronger but shorter-lived suppression, followed by a more pronounced resurgence once the intervention is deactivated and the accumulated susceptible population becomes available for transmission again. Taken together, these results indicate that the effectiveness of threshold-triggered fogging depends not only on the intensity of the intervention but also on how well its activation and deactivation are synchronized with the intrinsic epidemic cycle and the biology of vector recovery, an insight that is directly relevant for the design of reactive vector-control policies.

\subsection{Conclusion and Future Works}

This study proposed a non-smooth mathematical model of dengue transmission that explicitly incorporates limited hospital capacity and a threshold-triggered fogging policy, two operational features of real dengue control programs that are often idealized away in existing models. Our analytical and numerical results show that these capacity and threshold constraints are not merely operational details but genuine drivers of the disease dynamics: they generate boundary-equilibrium bifurcations, give rise to periodic outbreaks once hospital capacity is exceeded, and produce a non-monotonic response of key public health indicators to the fogging rate and activation threshold. From a public health standpoint, these findings suggest that an effective dengue control strategy requires careful, joint calibration of hospital capacity, fogging intensity, and response timing, rather than treating early or intensive intervention as inherently optimal; poorly timed or excessively delayed fogging can instead prolong transmission or trigger a larger resurgence in cases.

Several directions remain open for future research. On the analytical side, a formal classification of the criticality of the Hopf bifurcation, together with a Filippov-based sliding-mode analysis of the switching manifolds $I=kC$ and $I=C$, would provide a more complete characterization of the system's qualitative behavior. On the modeling side, incorporating seasonal forcing in mosquito population dynamics, stochastic effects in transmission and case reporting, and spatial heterogeneity in hospital capacity across regions would bring the model closer to real-world dengue transmission settings. Finally, formulating the fogging and hospitalization strategy as an optimal control problem, and calibrating the model against real epidemiological and hospitalization data, would strengthen its applicability as a decision-support tool for dengue control policy.

\section*{CRediT authorship contribution statement}
D.A., J.P.C., A.G., T.G., B.G.: Conceptualization, investigation, methodology, validation, writing – original draft, writing – review and editing. D.A., B.G.: Funding acquisition. D.A. : Project administration. D.A., A.G., T.G., B.G.: formal analysis.  J.P.C.: Software, Visualization.

\section*{Declaration of competing interest}
The authors declare that they have no known competing financial interests or personal relationships that could have appeared to influence the work reported in this paper.

\section*{Funding}
D.A. is funded by the Indonesian Endowment Fund for Education (LPDP) on behalf of the Indonesian Ministry of Higher Education, Science and Technology and managed under the EQUITY Program (Contract No. 4302/B3/DT.03.08/2025 and 573/PKS/R/UI/2025). B.G. gratefully acknowledges the support of the Johanna Quandt Young Academy (JQYA)-Goethe University Frankfurt Fellowship.

\section*{Data availability}
Not applicable.

\bibliography{denguedipobg}

\end{document}